\newtheorem{theorem}{Theorem}
\newtheorem{lemma}[theorem]{Lemma}
\newtheorem{proposition}[theorem]{Proposition}
\DeclareMathOperator{\N}{N}
\DeclareMathOperator{\Tr}{Tr}
\title{Permutation Rational Functions over Quadratic Extensions of Finite Fields}
\author{Ruikai Chen\textsuperscript{1, 2}\and Sihem Mesnager\textsuperscript{1, 2, 3}}
\date{\small\textsuperscript{1}Department of Mathematics, University of Paris VIII, F-93526 Saint-Denis\\\textsuperscript{2}Laboratory Analysis, Geometry and Applications, LAGA, University Sorbonne Paris Nord, CNRS, UMR 7539, F-93430, Villetaneuse, France\\\textsuperscript{3}Telecom Paris, Polytechnic institute of Paris, 91120 Palaiseau, France\\Emails: \href{mailto:chen.rk@outlook.com}{chen.rk@outlook.com}\quad\href{mailto:smesnager@univ-paris8.fr}{smesnager@univ-paris8.fr}}
\begin{document}

\maketitle

\noindent\textbf{Abstract.} Permutation rational functions over finite fields have attracted much attention in recent years. In this paper, we introduce a class of permutation rational functions over $\mathbb F_{q^2}$, whose numerators are so-called $q$-quadratic polynomials. To this end, we will first determine the exact number of zeros of a special $q$-quadratic polynomial in $\mathbb F_{q^2}$, by calculating character sums related to quadratic forms of $\mathbb F_{q^2}/\mathbb F_q$. Then given some rational function, we can demonstrate whether it induces a permutation of $\mathbb F_{q^2}$.\\
\textbf{Keywords.} permutation rational function, permutation, polynomial, finite field.\\

\section{Introduction}

A permutation polynomial over a finite field $\mathbb F_q$ is a polynomial that acts as a permutation on $\mathbb F_q$ in the usual way (see \cite[Chapter 7]{lidl1997}). Permutation polynomials play an important role in the theory of finite fields as well as applications in coding theory, cryptography, combinatorics, etc. For a survey of permutation polynomials, one may refer to \cite{hou2015permutation}.

There is a generalization of polynomials over finite fields, which can also induce permutations. A rational function over $\mathbb F_q$ is defined as a fraction whose numerator and denominator are polynomials over $\mathbb F_q$. In other words, a rational function $f$ over $\mathbb F_q$ can be written uniquely in the form $a\frac gh$, where $g$ and $h$ are relatively prime monic polynomials over $\mathbb F_q$ with $a\in\mathbb F_q^*$. It defines a function on $\mathbb F_q\cup\{\infty\}$ as
\[f(c)=\begin{cases}a\frac{g(c)}{h(c)}&\text{if }h(c)\ne0,\\\infty&\text{if }h(c)=0,\end{cases}\]
for $c\in\mathbb F_q$ and
\[f(\infty)=\begin{cases}\infty&\text{if }\deg(f)>\deg(g),\\a&\text{if }\deg(f)=\deg(g),\\0&\text{if }\deg(f)<\deg(g).\end{cases}\]
If this induces a permutation of $\mathbb F_q\cup\{\infty\}$, then $f$ is called a permutation rational function of $\mathbb F_q$. A few constructions of permutation rational functions are discovered recently. For example, permutation rational functions of degree $3$ and $4$ have been classified completely in \cite{ferraguti2020full} and \cite{hou2021rational} respectively. Also, it has been studied in \cite{hou2020type} along with \cite{bartoli2021conjecture} whether $f(x)=x+(x^p-x+b)^{-1}$, with $b\in\mathbb F_q$ and $p$ the characteristic of $\mathbb F_q$, is a permutation rational function over $\mathbb F_q$. In \cite{golouglu2022classification} there is a classification of permutation rational functions whose numerators and denominators are both in the form
\[a_{q+1}x^{q+1}+a_qx^q+a_1x+a_0.\]

Permutation rational functions, attracting much less attention, however, have almost the same applications as permutation polynomials. In particular, if a permutation rational function of $\mathbb F_q$ maps $\infty$ to $\infty$, then it is also a permutation of $\mathbb F_q$. In this paper, we are interested in such a class of rational functions over $\mathbb F_{q^2}$, potentially acting as permutation polynomials. Let $g$ be a $q$-quadratic polynomials over $\mathbb F_{q^2}$ (i.e., $g(x)$ is congruent to a quadratic polynomial modulo $x^q-x$) and consider the rational function defined by
\[f(x)=\frac{g(x)}{x^q+x+d}\]
with $\deg(g)>q$ and $d\in\mathbb F_{q^2}\setminus\mathbb F_q$. Since $f(\infty)=\infty$ and $x^q+x+d$ has no zero in $\mathbb F_{q^2}$, it is easy to see that $f$ as a function from $\mathbb F_{q^2}$ to itself is well-defined. This means for $t\in\mathbb F_{q^2}$, the zeros of $f(x)+t$ in $\mathbb F_{q^2}$ are exactly the same as those of $g(x)+t(x^q+x+d)$. It follows that $f$ permutes $\mathbb F_{q^2}$ if and only if $g(x)+t(x^q+x+d)$ has exactly one zero in $\mathbb F_{q^2}$ for every $t\in\mathbb F_{q^2}$. Therefore, in Section 2, we will first attempt to determine the exact number of zeros of $q$-quadratic polynomials over $\mathbb F_{q^2}$, generally in the form
\[x^{q+1}+\alpha x^{2q}+\beta x^2+ax^q+bx+c\]
or
\[\alpha x^{2q}+\beta x^2+ax^q+bx+c.\]
Then in Section 3, for some specific rational functions over $\mathbb F_{q^2}$, we are able to tell whether they induce permutations of $\mathbb F_{q^2}$.

The following notation will be used throughout. For the polynomial ring $\mathbb F_{q^2}[x]$, let $x^{q^2}-x=0$ by abuse of notation, since all these polynomials are treated as maps from $\mathbb F_{q^2}$ to itself. Let $\Tr$ and $\N$ be defined by
\[\Tr(x)=x^q+x\quad\text{and}\quad\N(x)=x^{q+1}.\]
Let $\chi$ and $\psi$ be the canonical additive characters of $\mathbb F_{q^2}$ and $\mathbb F_q$ respectively, which means $\chi=\psi\circ\Tr$. In the case that $q$ is odd, let $\eta$ be the quadratic character of $\mathbb F_q^*$, and $G$ be the quadratic Gauss sum of $\mathbb F_q$ defined as
\[G=\sum_{u\in\mathbb F_q^*}\eta(u)\psi(u),\]
with $G^2=\eta(-1)q$.


\section{On the Number of Zeros of $q$-quadratic Polynomials in $\mathbb F_{q^2}$}

Assume $q$ is odd in this section. Let $g$ be a $q$-quadratic polynomials over $\mathbb F_{q^2}$. Let $L(x)=c_1x^q+c_0x$ be nonzero with $c_0,c_1\in\mathbb F_{q^2}$. Clearly $g\circ L$ and $L\circ g$ are both $q$-quadratic polynomials. If $\N(c_0)\ne\N(c_1)$, then $L$ is a linear endomorphism on $\mathbb F_{q^2}/\mathbb F_q$ that permutes $\mathbb F_{q^2}$, and the number of zeros of $g$ in $\mathbb F_{q^2}$ is exactly the same as that of $g\circ L$ or $L\circ g$. Thus, some specific forms of polynomials,
\[x^{q+1}+x^2+ax^q+bx+c\]
and
\[\alpha x^{2q}+\beta x^2+ax^q+bx+c\]
with $\N(\alpha)=\N(\beta)$, will be investigated, and others may be dealt with via some linear maps, as shown later. Before that, an essential lemma will be introduced.

\begin{lemma}\label{sum}
Let $A,B,C\in\mathbb F_{q^2}$ with $B\ne0$ and $D=\Tr(A)^2-4\N(B)$. If there does not exist $\theta\in\mathbb F_{q^2}$ such that $\Tr(A)\theta^q+2B\theta+C=0$, then $D=0$ and
\[\sum_{w\in\mathbb F_{q^2}}\chi(Aw^{q+1}+Bw^2+Cw)=0.\]
If such $\theta$ exists, then
\[\sum_{w\in\mathbb F_{q^2}}\chi(Aw^{q+1}+Bw^2+Cw)=\begin{cases}qG\chi(-A\theta^{q+1}-B\theta^2)&\text{if }\Tr(A)=2B^{\frac{q+1}2},\\-qG\chi(-A\theta^{q+1}-B\theta^2)&\text{if }\Tr(A)=-2B^{\frac{q+1}2},\\-\eta(D)q\chi(-A\theta^{q+1}-B\theta^2)&\text{if }\Tr(A)^2\ne4\N(B),\end{cases}\]
where
\[\theta=\frac{-\Tr(A)C^q+2B^qC}{\Tr(A)^2-4\N(B)}\]
in the case $D\ne0$.
\end{lemma}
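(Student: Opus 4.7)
The plan is to evaluate $S = \sum_{w \in \mathbb F_{q^2}} \chi(A w^{q+1} + B w^2 + C w)$ by performing an affine substitution $w = \theta + u$ that kills the linear part of the exponent, then computing the resulting purely quadratic Gauss sum. Expanding $\Tr\bigl(A(\theta+u)^{q+1} + B(\theta+u)^2 + C(\theta+u)\bigr)$ and using $\chi = \psi \circ \Tr$, the $u$-linear part works out to $\Tr\bigl(u \cdot (\Tr(A)\theta^q + 2B\theta + C)\bigr)$, which vanishes for all $u$ exactly when $M(\theta) := \Tr(A)\theta^q + 2B\theta = -C$. Under this condition the sum factorizes as $S = \chi(A\theta^{q+1} + B\theta^2 + C\theta) \cdot S_0$ with $S_0 := \sum_u \chi(Au^{q+1} + Bu^2)$; substituting $C\theta = -\Tr(A)\theta^{q+1} - 2B\theta^2$ and using $\chi(A\theta^{q+1}) = \chi(A^q\theta^{q+1})$ (valid since $\theta^{q+1} \in \mathbb F_q$) collapses the prefactor to $\chi(-A\theta^{q+1} - B\theta^2)$.

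To study solvability of $M(\theta) = -C$, I pair it with its $q$-th power to obtain a $2 \times 2$ system whose determinant is $4\N(B) - \Tr(A)^2 = -D$. Thus $M$ is bijective iff $D \ne 0$, in which case Cramer's rule yields the stated formula for $\theta$; hence non-solvability forces $D = 0$. In the non-solvable case, for any nonzero $z \in \ker M$ the translation $w \mapsto w + z$ gives $S = \chi(A z^{q+1} + B z^2 + C z) \cdot S$, and $2Bz = -\Tr(A) z^q$ makes $\Tr(Az^{q+1} + Bz^2) = 0$, reducing the factor to $\chi(Cz)$. Since $M$ is self-adjoint under the trace pairing, $\operatorname{Im} M = (\ker M)^\perp$, so non-membership of $-C$ in $\operatorname{Im} M$ supplies some $z \in \ker M$ with $\chi(Cz) \ne 1$, forcing $S = 0$.

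It remains to compute $S_0$. Parameterizing $\mathbb F_{q^2} = \mathbb F_q \oplus \mathbb F_q \omega$ with $\omega^q = -\omega$ and $\delta := \omega^2 \in \mathbb F_q$ a fixed non-square, a direct calculation exhibits $\Tr(Au^{q+1} + Bu^2)$ as a binary quadratic form over $\mathbb F_q$ of discriminant $-4\delta D$. When $D \ne 0$, diagonalization together with $G^2 = \eta(-1) q$ and $\eta(\delta) = -1$ gives $S_0 = -\eta(D) q$. When $D = 0$ the form has rank one: $\N(B) = (\Tr(A)/2)^2$ forces $B$ to be a square in $\mathbb F_{q^2}^*$, say $B = \tau^2$ with $\tau^{q+1} = B^{(q+1)/2}$, and the identity $\Tr(Bu^2) = \Tr(\tau u)^2 - 2\tau^{q+1}\N(u)$ rewrites the form as $(\Tr(A) - 2\tau^{q+1})\N(u) + \Tr(\tau u)^2$. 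The subcase $\Tr(A) = 2B^{(q+1)/2}$ reduces this to $\Tr(\tau u)^2$, and stratifying by the surjective linear form $u \mapsto \Tr(\tau u)$ with $\sum_{v \in \mathbb F_q}\psi(v^2) = G$ yields $S_0 = qG$; the subcase $\Tr(A) = -2B^{(q+1)/2}$ collapses to $\delta \mu(u)^2$, where $\mu(u) := (\tau u - \tau^q u^q)/\omega$ is a surjective $\mathbb F_q$-linear map into $\mathbb F_q$, contributing the extra factor $\eta(\delta) = -1$ and hence $S_0 = -qG$. The most delicate step will be this rank-one analysis, where the two sign subcases of $\Tr(A) = \pm 2 B^{(q+1)/2}$ must be correctly aligned with the two standard reductions of the degenerate quadratic form; the factorization $\Tr(Bu^2) = \Tr(\tau u)^2 - 2\tau^{q+1}\N(u)$ is the key algebraic identity making this alignment explicit.
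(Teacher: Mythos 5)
Your proof is correct, and it follows the paper's skeleton only in the first step (the affine shift $w\mapsto w+\theta$ killing the linear term, with the same verification that the constant collapses to $\chi(-A\theta^{q+1}-B\theta^2)$ and that solvability of $\Tr(A)\theta^q+2B\theta=-C$ is governed by the Dickson-type determinant $4\N(B)-\Tr(A)^2=-D$, Cramer's rule giving the stated $\theta$). You diverge from the paper in two places, both toward a more self-contained argument. First, for the non-solvable case the paper fixes $A,B$, lets $C$ vary, and applies Parseval's identity: the $q$ values of $C$ in the image of $\Tr(A)x^q+2Bx$ already exhaust the total $L^2$-mass $q^4$, so the remaining $C$ give zero. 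You instead translate by a kernel element $z$ of $M(x)=\Tr(A)x^q+2Bx$, note $\Tr(Az^{q+1}+Bz^2)=0$ on the kernel so that $S=\chi(Cz)S$, and use self-adjointness of $M$ under the pairing $\chi(xy)$ to produce $z\in\ker M$ with $\chi(Cz)\ne1$; this is a pointwise argument that is arguably cleaner and identifies exactly which $C$ kill the sum without the averaging step. Second, and more substantially, the paper outsources the evaluation of $S_0=\sum_u\chi(Au^{q+1}+Bu^2)$ entirely to an external reference, whereas you recompute it: the nondegenerate case by diagonalizing the binary quadratic form $\Tr(Au^{q+1}+Bu^2)$ over $\mathbb F_q$ (your discriminant $\pm4\delta D$ is a matter of sign convention and does not affect $\eta(-\delta D)G^2=-\eta(D)q$), and the rank-one case via the identity $\Tr(Bu^2)=\Tr(\tau u)^2-2\tau^{q+1}\N(u)$ with $B=\tau^2$, which correctly splits $\Tr(A)=\pm2B^{(q+1)/2}$ into the forms $\Tr(\tau u)^2$ and $(\tau u-(\tau u)^q)^2=\delta\mu(u)^2$, giving $qG$ and $\eta(\delta)qG=-qG$ respectively. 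Your version costs more computation but removes the dependence on the cited theorem and makes the sign dichotomy transparent; the paper's version is shorter at the price of a black box and a less direct treatment of the vanishing case.
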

\begin{proof}
Note that
\[\begin{split}&\mathrel{\phantom{=}}\Tr\left(A(x+\theta)^{q+1}+B(x+\theta)^2+C(x+\theta)\right)\\&=\Tr\left(Ax^{q+1}+Bx^2+(\Tr(A)\theta^q+2B\theta+C)x+A\theta^{q+1}+B\theta^2+C\theta\right)\end{split}\]
for $\theta\in\mathbb F_{q^2}$. If $\Tr(A)\theta^q+2B\theta+C=0$ for some $\theta\in\mathbb F_{q^2}$, then $B\theta^2+C\theta=-\Tr(A)\theta^{q+1}-B\theta^2$ and
\[\begin{split}&\mathrel{\phantom{=}}\sum_{w\in\mathbb F_{q^2}}\chi(Aw^{q+1}+Bw^2+Cw)\\&=\sum_{w\in\mathbb F_{q^2}}\chi(A(w+\theta)^{q+1}+B(w+\theta)^2+C(w+\theta))\\&=\sum_{w\in\mathbb F_{q^2}}\chi(Aw^{q+1}+Bw^2+A\theta^{q+1}+B\theta^2+C\theta)\\&=\sum_{w\in\mathbb F_{q^2}}\chi(Aw^{q+1}+Bw^2-A^q\theta^{q+1}-B\theta^2)\\&=\sum_{w\in\mathbb F_{q^2}}\chi(Aw^{q+1}+Bw^2-A\theta^{q+1}-B\theta^2),\end{split}\]
where
\[\sum_{w\in\mathbb F_{q^2}}\chi(Aw^{q+1}+Bw^2)=\begin{cases}qG&\text{if }\Tr(A)=2B^{\frac{q+1}2},\\-qG&\text{if }\Tr(A)=-2B^{\frac{q+1}2},\\-\eta(D)q&\text{if }\Tr(A)^2\ne4\N(B)\ne0,\end{cases}\]
according to \cite[Theorem 1 and Proposition 6]{chen2023evaluation}.

In the case $\Tr(A)^2\ne4\N(B)$, the polynomial $\Tr(A)x^q+2Bx+C$ is a permutation polynomial of $\mathbb F_{q^2}$, and its unique zero is
\[\theta=\frac{-\Tr(A)C^q+2B^qC}{\Tr(A)^2-4\N(B)}\]
as easily seen. Then
\[\begin{split}&\mathrel{\phantom{=}}\sum_{w\in\mathbb F_{q^2}}\chi(Aw^{q+1}+Bw^2+Cw)\\&=\sum_{w\in\mathbb F_{q^2}}\chi(Aw^{q+1}+Bw^2)\chi(-A\theta^{q+1}-B\theta^2)\\&=-\eta(D)q\chi(-A\theta^{q+1}-B\theta^2).\end{split}\]

Suppose now that $\Tr(A)^2=4\N(B)$. With $A$ and $B$ fixed, let
\[S=\{C\in\mathbb F_{q^2}|\Tr(A)\theta^q+2B\theta+C=0\text{ for some }\theta\in\mathbb F_{q^2}\}.\]
The image of $\Tr(A)x^q+2Bx$ as a linear endomorphism of $\mathbb F_{q^2}/\mathbb F_q$ has exactly $q$ elements, so $|S|=q$. For $C\in S$, we have
\[\Big|\sum_{w\in\mathbb F_{q^2}}\chi(Aw^{q+1}+Bw^2+Cw)\Big|^2=q^3,\]
and thus,
\[\sum_{C\in S}\left|\sum_{w\in\mathbb F_{q^2}}\chi(Aw^{q+1}+Bw^2+Cw)\right|^2=q^4,\]
while
\[\sum_{C\in\mathbb F_{q^2}}\left|\sum_{w\in\mathbb F_{q^2}}\chi(Aw^{q+1}+Bw^2+Cw)\right|^2=q^2\sum_{w\in\mathbb F_{q^2}}\left|\chi(Aw^{q+1}+Bw^2)\right|^2=q^4,\]
by Parseval's identity. Hence, if $C\notin S$, then
\[\sum_{w\in\mathbb F_{q^2}}\chi(Aw^{q+1}+Bw^2+Cw)=0,\]
as desired.
\end{proof}

\subsection{The First Kind of Polynomials}

Consider polynomials of the form $x^{q+1}+x^2+ax^q+bx+c$, whose number of zeros in $\mathbb F_{q^2}$ is denoted by $M$. Then
\begin{equation}\label{eq2}\begin{split}Mq^2&=\sum_{w\in\mathbb F_{q^2}}\sum_{u\in\mathbb F_{q^2}}\chi(u(w^{q+1}+w^2+aw^q+bw+c))\\&=\sum_{u\in\mathbb F_{q^2}}\sum_{w\in\mathbb F_{q^2}}\chi(uw^{q+1}+uw^2+(a^qu^q+bu)w+cu),\end{split}\end{equation}
where the inner sum can be evaluated for every $u\in\mathbb F_{q^2}$ according to Lemma \ref{sum}. In this case we have $D=\Tr(u)^2-4\N(u)=(u^q-u)^2$. Suppose first that $u\ne0$ and $D=0$, which means $u\in\mathbb F_q^*$ and
\[\Tr(u)\theta^q+2u\theta+(a^qu^q+bu)=u(2\theta^q+2\theta+a^q+b)\]
for $\theta\in\mathbb F_{q^2}$. Therefore, if $a^q+b\notin\mathbb F_q$, then the above is nonzero, and
\begin{equation}\label{eq3}\sum_{u\in\mathbb F_q^*}\sum_{w\in\mathbb F_{q^2}}\chi(uw^{q+1}+uw^2+(a^qu^q+bu)w+cu)=0.\end{equation}
If $a^q+b\in\mathbb F_q$, then let $\theta\in\mathbb F_{q^2}$ be such that $2\theta^q+2\theta+a^q+b=0$, and we get
\[\chi(-u\theta^{q+1}-u\theta^2)=\psi(\Tr(-u\theta\Tr(\theta)))=\psi(-u\Tr(\theta)^2)=\psi\left(-\frac u4(a^q+b)^2\right).\]
Furthermore, noticing that $\Tr(u)=2u^{\frac{q+1}2}$ if and only if $u^{\frac{q-1}2}=1$, we have
\[\begin{split}&\mathrel{\phantom{=}}\sum_{u\in\mathbb F_q^*}\sum_{w\in\mathbb F_{q^2}}\chi\left(uw^{q+1}+uw^2+(a^qu^q+bu)w+cu\right)\\&=\sum_{u\in\mathbb F_q^*}\eta(u)qG\psi\left(u\left(-\frac14(a^q+b)^2+\Tr(c)\right)\right)\\&=\eta\left(-\frac14(a^q+b)^2+\Tr(c)\right)qG\sum_{u\in\mathbb F_q^*}\eta(u)\psi(u)\\&=\eta(-(a^q+b)^2+4\Tr(c))qG^2\\&=\eta((a^q+b)^2-4\Tr(c))q^2,\end{split}\]
if $(a^q+b)^2-4\Tr(c)\ne0$, and
\[\sum_{u\in\mathbb F_q^*}\sum_{w\in\mathbb F_{q^2}}\chi(uw^{q+1}+uw^2+(a^qu^q+bu)w+cu)=0\]
otherwise. With $\eta(0)=0$ by convention, we may write
\begin{equation}\label{eq4}\sum_{u\in\mathbb F_q^*}\sum_{w\in\mathbb F_{q^2}}\chi(uw^{q+1}+uw^2+(a^qu^q+bu)w+cu)=\eta((a^q+b)^2-4\Tr(c))q^2.\end{equation}

Suppose now $D\ne0$. Then $D=(u^q-u)^2$ is the discriminant of the minimal polynomial of $u\in\mathbb F_{q^2}\setminus\mathbb F_q$ over $\mathbb F_q$, so $\eta(D)=-1$. It follows from Lemma \ref{sum} that
\begin{equation}\label{eq5}\sum_{w\in\mathbb F_{q^2}}\chi(uw^{q+1}+uw^2+(a^qu^q+bu)w+cu)=q\chi(-u\theta_u^{q+1}-u\theta_u^2+cu),\end{equation}
where
\[\theta_u=\frac{-\Tr(u)(a^qu^q+bu)^q+2u^q(a^qu^q+bu)}{\Tr(u)^2-4\N(u)}.\]
Let $F(u)=-u\theta_u^{q+1}-u\theta_u^2+cu$, which satisfies $F(ju)=jF(u)$ for $j\in\mathbb F_q^*$. Fix an element $\epsilon\in\mathbb F_{q^2}\setminus\mathbb F_q$, so that $u\in\mathbb F_{q^2}\setminus\mathbb F_q$ if and only if $u=j(k+\epsilon)$ for some $j\in\mathbb F_q^*$ and $k\in\mathbb F_q$. Then
\[\sum_{u\in\mathbb F_{q^2}\setminus\mathbb F_q}\chi(F(u))=\sum_{k\in\mathbb F_q}\sum_{j\in\mathbb F_q^*}\chi(F(j(k+\epsilon)))=\sum_{k\in\mathbb F_q}\sum_{j\in\mathbb F_q^*}\psi(j\Tr(F(k+\epsilon))),\]
where
\[\sum_{j\in\mathbb F_q^*}\psi(j\Tr(F(k+\epsilon)))=\begin{cases}q-1&\text{if }\Tr(F(k+\epsilon))=0,\\-1&\text{if }\Tr(F(k+\epsilon))\ne0.\end{cases}\]
Let $M_0$ be the number of $k\in\mathbb F_q$ such that $\Tr(F(k+\epsilon))=0$, so that
\begin{equation}\label{eq6}\sum_{u\in\mathbb F_{q^2}\setminus\mathbb F_q}\chi(F(u))=M_0(q-1)-(q-M_0)=(M_0-1)q.\end{equation}

We proceed to determine the number $M_0$, assuming $a^q+b\notin\mathbb F_q$. Let $\epsilon=a^q-a-b^q+b$. It is easy to see that $\epsilon\notin\mathbb F_q$, $\Tr(\epsilon)=0$ and $\epsilon^2=-\epsilon^{q+1}$. Then for $k\in\mathbb F_q$,
\[\begin{split}\theta_{k+\epsilon}&=\frac{-2k((a^q+b)k+(b-a^q)\epsilon)^q+2(k-\epsilon)((a^q+b)k+(b-a^q)\epsilon)}{4\epsilon^2}\\&=\frac{\epsilon k^2+(-2a^q-a+b^q)\epsilon k+(a^q-b)\epsilon^2}{2\epsilon^2}.\end{split}\]
Observing that
\[\Tr((-2a^q-a+b^q)\epsilon)=\Tr(\Tr(b-a)\epsilon)-\Tr((a^q+b)\epsilon)=-\epsilon^2,\]
we have
\[\begin{split}\Tr(\theta_{k+\epsilon})&=\frac1{2\epsilon^2}\Tr(\epsilon k^2+(-2a^q-a+b^q)\epsilon k+(a^q-b)\epsilon^2)\\&=\frac1{2\epsilon^2}(-\epsilon^2k+\Tr(a-b)\epsilon^2)\\&=-\frac12(k+\Tr(b-a)),\end{split}\]
and
\[\begin{split}&\mathrel{\phantom{=}}\Tr((k+\epsilon)\theta_{k+\epsilon})\\&=\frac1{2\epsilon^2}\Tr((k+\epsilon)(\epsilon k^2+(-2a^q-a+b^q)\epsilon k+(a^q-b)\epsilon^2))\\&=\frac1{2\epsilon^2}(\Tr(\epsilon^2+(-2a^q-a+b^q)\epsilon)k^2-2\Tr(a)\epsilon^2k+\Tr((a^q-b)\epsilon)\epsilon^2)\\&=\frac1{2}(k^2-2\Tr(a)k-\Tr((a+b)\epsilon)).\end{split}\]
Then
\[\begin{split}&\mathrel{\phantom{=}}4\Tr(F(k+\epsilon))\\&=-4\Tr((k+\epsilon)\theta_{k+\epsilon})\Tr(\theta_{k+\epsilon})+4\Tr(c(k+\epsilon))\\&=(k+\Tr(b-a))(k^2-2\Tr(a)k-\Tr((a+b)\epsilon))+4\Tr(c)k+4\Tr(c\epsilon)\\&=k^3-\Tr(3a-b)k^2+(2\Tr(a)\Tr(a-b)-\Tr((a+b)\epsilon))k\\&\mathrel{\phantom{=}}+\Tr(a-b)\Tr((a+b)\epsilon)+4\Tr(c)k+4\Tr(c\epsilon),\end{split}\]
where
\[\begin{split}&\mathrel{\phantom{=}}2\Tr(a)\Tr(a-b)-\Tr((a+b)\epsilon)\\&=2\Tr(a(a-b))+2\Tr(a(a-b)^q)-\Tr((a+b)\epsilon)\\&=4\Tr(a(a-b))-2\Tr(a(a-b))+2\Tr(a(a-b)^q)-\Tr((a+b)\epsilon)\\&=4\Tr(a^2-ab)+2\Tr(a\epsilon)-\Tr((a+b)\epsilon)\\&=4\Tr(a^2-ab)+\Tr((a-b)\epsilon)\\&=4\Tr(a^2-ab)-\epsilon^2,\end{split}\]
and
\[\begin{split}&\mathrel{\phantom{=}}\Tr(a-b)\Tr((a+b)\epsilon)\\&=\Tr((a-b)(a+b)\epsilon)+\Tr((a^q-b^q)(a+b)\epsilon)\\&=\Tr((a^2-b^2+a^qb-ab^q)\epsilon)\\&=4\Tr((a^2-ab)\epsilon)+\Tr((-3a^2-b^2+2a^qb+4ab)\epsilon)\\&=4\Tr((a^2-ab)\epsilon)+\Tr((3a-b)(a^q-a-b^q+b)\epsilon)\\&=4\Tr((a^2-ab)\epsilon)+\Tr(3a-b)\epsilon^2.\end{split}\]
It turns out that
\begin{equation}\label{eq7}4\Tr(F(k+\epsilon))=k^3-\Tr(3a-b)k^2+(4\Tr(\gamma)-\epsilon^2)k+4\Tr(\gamma\epsilon)+\Tr(3a-b)\epsilon^2,\end{equation}
where $\gamma=a^2-ab+c$.

It remains to consider the case $a^q+b\in\mathbb F_q$. Now $a-b=\Tr(a)-(a^q+b)\in\mathbb F_q$ and let $\epsilon$ be another element in $\mathbb F_{q^2}\setminus\mathbb F_q$ with $\Tr(\epsilon)=0$. By the same argument,
\[\begin{split}\theta_{k+\epsilon}&=\frac{-2k((a^q+b)k+(b-a^q)\epsilon)^q+2(k-\epsilon)((a^q+b)k+(b-a^q)\epsilon)}{4\epsilon^2}\\&=\frac{(-2a^q-a+b^q)\epsilon k+(a^q-b)\epsilon^2}{2\epsilon^2}\\&=\frac{(\Tr(b-a)-(a^q+b))\epsilon k+(a^q-b)\epsilon^2}{2\epsilon^2}.\end{split}\]
so that
\[\Tr(\theta_{k+\epsilon})=\frac1{2\epsilon^2}\Tr(a-b)\epsilon^2=a-b,\]
and
\[\begin{split}&\mathrel{\phantom{=}}\Tr((k+\epsilon)\theta_{k+\epsilon})\\&=\frac1{2\epsilon^2}\Tr((k+\epsilon)((\Tr(b-a)-(a^q+b))\epsilon k+(a^q-b)\epsilon^2))\\&=\frac1{2\epsilon^2}(-2\Tr(a)\epsilon^2k+\Tr((a^q-b)\epsilon)\epsilon^2)\\&=-\frac1{2}(2\Tr(a)k+\Tr((a+b)\epsilon)).\end{split}\]
It follows immediately that
\[\begin{split}2\Tr(F(k+\epsilon))&=(a-b)(2\Tr(a)k+\Tr((a+b)\epsilon))+2\Tr(c(k+\epsilon))\\&=2\Tr(a^2-ab+c)k+\Tr((a^2-b^2+2c)\epsilon).\end{split}\]
If $\Tr(a^2-ab+c)=0$, then
\[\begin{split}\Tr((a^2-b^2+2c)\epsilon)&=\Tr((a^2-b^2+2ab-2a^2+2(a^2-ab+c))\epsilon)\\&=-\Tr((a-b)^2\epsilon)+2\Tr((a^2-ab+c)\epsilon)\\&=4(a^2-ab+c)\epsilon.\end{split}\]
Therefore,
\begin{equation}\label{eq8}M_0=\begin{cases}1&\text{if }\Tr(a^2-ab+c)\ne0,\\q&\text{if }a^2-ab+c=0\\0&\text{otherwise.}\end{cases}\end{equation}

Finally, combining \eqref{eq2}, \eqref{eq3}, \eqref{eq4}, \eqref{eq5}, \eqref{eq6} and \eqref{eq7}, we obtain the following.

\begin{proposition}\label{N1}
If $a^q+b\notin\mathbb F_q$, then the number of zeros of $x^{q+1}+x^2+ax^q+bx+c$ in $\mathbb F_{q^2}$ is equal to the number of zeros of
\[x^3-\Tr(3a-b)x^2+(4\Tr(\gamma)-\epsilon^2)x+4\Tr(\gamma\epsilon)+\Tr(3a-b)\epsilon^2\]
in $\mathbb F_q$, where $\gamma=a^2-ab+c$ and $\epsilon=a^q-a-b^q+b$. If $a^q+b\in\mathbb F_q$, then it is
\[\eta((a^q+b)^2-4\Tr(c))+M_0,\]
with $M_0$ as in \eqref{eq8}.
\end{proposition}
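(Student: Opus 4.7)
The plan is to count $M$, the number of zeros of $g(x)=x^{q+1}+x^2+ax^q+bx+c$ in $\mathbb F_{q^2}$, using the standard orthogonality relation
\[Mq^2=\sum_{u\in\mathbb F_{q^2}}\sum_{w\in\mathbb F_{q^2}}\chi(u\,g(w))\]
recorded in \eqref{eq2}. I would then partition the outer $u$-sum into three pieces --- $u=0$, $u\in\mathbb F_q^*$, and $u\in\mathbb F_{q^2}\setminus\mathbb F_q$ --- and evaluate each of them by applying Lemma \ref{sum} with $A=B=u$ and $C=a^qu^q+bu$.

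The $u=0$ contribution is obviously $q^2$. For $u\in\mathbb F_q^*$ the discriminant $D=(u^q-u)^2$ vanishes, and the solvability condition in Lemma \ref{sum} collapses to $2\theta^q+2\theta+a^q+b=0$. This is where the proposition bifurcates: if $a^q+b\notin\mathbb F_q$ then no such $\theta$ exists and the total $u\in\mathbb F_q^*$ contribution is $0$ by \eqref{eq3}; if $a^q+b\in\mathbb F_q$ then Lemma \ref{sum} together with $G^2=\eta(-1)q$ yields $\eta((a^q+b)^2-4\Tr(c))q^2$ by \eqref{eq4}. For $u\in\mathbb F_{q^2}\setminus\mathbb F_q$ one has $\eta(D)=-1$, so Lemma \ref{sum} gives $q\chi(F(u))$ with $F$ as in \eqref{eq5}, and the homogeneity $F(ju)=jF(u)$ for $j\in\mathbb F_q^*$ permits the parametrization $u=j(k+\epsilon)$ with $\epsilon\in\mathbb F_{q^2}\setminus\mathbb F_q$ fixed, reducing this contribution to $(M_0-1)q^2$ where $M_0=\#\{k\in\mathbb F_q:\Tr(F(k+\epsilon))=0\}$, as in \eqref{eq6}.

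The technically heaviest step is making $\Tr(F(k+\epsilon))$ explicit. When $a^q+b\notin\mathbb F_q$ the natural choice $\epsilon=a^q-a-b^q+b$ has $\Tr(\epsilon)=0$ and $\epsilon^2=-\N(\epsilon)$, which simplifies the expansion of $\theta_{k+\epsilon}$, $\Tr(\theta_{k+\epsilon})$, and $\Tr((k+\epsilon)\theta_{k+\epsilon})$; repeated use of trace identities for products in $\mathbb F_{q^2}/\mathbb F_q$ produces the cubic \eqref{eq7} in $k$, whose $\mathbb F_q$-roots thus count $M_0$. When $a^q+b\in\mathbb F_q$ the same style of expansion (now with any zero-trace $\epsilon$) collapses $\Tr(F(k+\epsilon))$ to a linear polynomial in $k$ of leading coefficient $\Tr(a^2-ab+c)$, giving the three-case formula \eqref{eq8}.

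Summing the three pieces yields $Mq^2=q^2+(\text{middle term})+(M_0-1)q^2$, so dividing by $q^2$ gives $M=M_0$ in the first case of the proposition and $M=\eta((a^q+b)^2-4\Tr(c))+M_0$ in the second. The real obstacle is purely computational --- carefully tracking the traces of the various mixed products of $a$, $b$, $\epsilon$ in order to recognize the simplified coefficients $4\Tr(\gamma)-\epsilon^2$ and $4\Tr(\gamma\epsilon)+\Tr(3a-b)\epsilon^2$ in \eqref{eq7} --- but once that cubic identity is secured the proposition is just the assembly of the three contributions.
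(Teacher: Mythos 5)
Your proposal follows the paper's argument exactly: the same orthogonality identity \eqref{eq2}, the same three-way split of the $u$-sum, the same application of Lemma \ref{sum} with $A=B=u$ and $C=a^qu^q+bu$, the same bifurcation on whether $a^q+b\in\mathbb F_q$, the same use of $\eta(D)=-1$ and the homogeneity $F(ju)=jF(u)$ to reduce the non-rational $u$ to the count $M_0$, and the same choice $\epsilon=a^q-a-b^q+b$ leading to the cubic \eqref{eq7}. This is correct and is essentially the paper's own proof.
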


\subsection{The Second Kind of Polynomials}

Using the same method, we can discuss the polynomial
\[\alpha x^{2q}+\beta x^2+ax^q+bx+c,\]
with coefficients in $\mathbb F_{q^2}$, where $\N(\alpha)=\N(\beta)\ne0$. Its number of zeros in $\mathbb F_{q^2}$, denoted by $M$, satisfies
\[\begin{split}Mq^2&=\sum_{w\in\mathbb F_{q^2}}\sum_{u\in\mathbb F_{q^2}}\chi(u(\alpha w^{2q}+\beta w^2+aw^q+bw+c))\\&=\sum_{u\in\mathbb F_{q^2}}\sum_{w\in\mathbb F_{q^2}}\chi((\alpha^qu^q+\beta u)w^2+(a^qu^q+bu)w+cu).\end{split}\]
Let $\alpha=-\beta^q\omega^{q-1}$ for some $\omega\in\mathbb F_{q^2}^*$. Then $\alpha x^{2q}+\beta x^2=\omega^{-1}(\omega\beta x^2-(\omega\beta x^2)^q)$. Without loss of generality, let $\alpha=-\beta^q$, so
\[Mq^2=\sum_{u\in\mathbb F_{q^2}}\sum_{w\in\mathbb F_{q^2}}\chi(\beta(u-u^q)w^2+(a^qu^q+bu)w+cu).\]
If $u-u^q=0$, then
\begin{equation}\label{eq14}\sum_{w\in\mathbb F_{q^2}}\chi(\beta(u-u^q)w^2+(a^qu^q+bu)w+cu)=\begin{cases}0&\text{if }(a^q+b)u\ne0,\\q^2\chi(cu)&\text{if }(a^q+b)u=0.\end{cases}\end{equation}
If $u-u^q\ne0$, then $\N(u-u^q)=-(u-u^q)^2$, and by Lemma \ref{sum},
\begin{equation}\label{eq15}\begin{split}&\mathrel{\phantom{=}}\sum_{w\in\mathbb F_{q^2}}\chi(\beta(u-u^q)w^2+(a^qu^q+bu)w+cu)\\&=-\eta(-\N(\beta(u-u^q)))q\chi\left(-\frac{(a^qu^q+bu)^2}{4\beta(u-u^q)}+cu\right)\\&=\eta(\N(\beta))q\chi\left(-\frac{(a^qu^q+bu)^2}{4\beta(u-u^q)}+cu\right).\end{split}\end{equation}
Fix an element $\epsilon\in\mathbb F_{q^2}\setminus\mathbb F_q$ with $\Tr(\epsilon)=0$, and let $M_0$ be the number of $k\in\mathbb F_q$ such that
\[\Tr\left(-\frac{(a^q(k+\epsilon)^q+b(k+\epsilon))^2}{4\beta(k+\epsilon-(k+\epsilon)^q)}+c(k+\epsilon)\right)=0.\]
Then
\[\sum_{u\in\mathbb F_{q^2}\setminus\mathbb F_q}\chi\left(-\frac{(a^qu^q+bu)^2}{4\beta(u-u^q)}+cu\right)=(M_0-1)q,\]
evaluated in the same way as in the last case. Let $A_0=\beta^{-1}(b-a^q)^2-8c$, $A_1=\beta^{-1}(b^2-a^{2q})-4c$ and $A_2=\beta^{-1}(a^q+b)^2$, so that
\[\begin{split}&\mathrel{\phantom{=}}-8\Tr\left(-\frac{(a^q(k+\epsilon)^q+b(k+\epsilon))^2}{4\beta(k+\epsilon-(k+\epsilon)^q)}+c(k+\epsilon)\right)\\&=\Tr\left(\frac{((a^q+b)k+(b-a^q)\epsilon)^2}{\beta\epsilon}-8c(k+\epsilon)\right)\\&=\Tr(\beta^{-1}(a^q+b)^2\epsilon^{-1})k^2+\Tr(2\beta^{-1}(a^q+b)(b-a^q)-8c)k\\&\mathrel{\phantom{=}}+\Tr((\beta^{-1}(b-a^q)^2-8c)\epsilon)\\&=\Tr(A_2\epsilon^{-1})k^2+2\Tr(A_1)k+\Tr(A_0\epsilon)\\&=(A_2-A_2^q)\epsilon^{-1}k^2+2\Tr(A_1)k+(A_0-A_0^q)\epsilon.\end{split}\]
Then $M_0$ is the number of zeros of the polynomial
\[(A_2-A_2^q)\epsilon^{-1}x^2+2\Tr(A_1)x+(A_0-A_0^q)\epsilon\]
in $\mathbb F_q$. In particular, if $A_2\notin\mathbb F_q$, then it is a quadratic polynomial over $\mathbb F_q$, whose discriminant is
\[4\Tr(A_1)^2-4(A_2-A_2^q)(A_0-A_0^q)=4\Tr(A_1)^2+4\Tr(A_0(A_2^q-A_2)).\]
If $a^q+b=0$, then
\[(A_2-A_2^q)\epsilon^{-1}x^2+2\Tr(A_1)x+(A_0-A_0^q)\epsilon=-8\Tr(c)x+4(\beta^{-1}a^{2q}-\beta^{-q}a^2+2c^q-2c)\epsilon.\]
Hence, using \eqref{eq14} and \eqref{eq15} we have
\[\sum_{u\in\mathbb F_q}\sum_{w\in\mathbb F_{q^2}}\chi(\beta(u-u^q)w^2+(a^qu^q+bu)w+cu)=\begin{cases}q^2&\text{if }a^q+b\ne0,\\q^3&\text{if }a^q+b=\Tr(c)=0,\\0&\text{otherwise},\end{cases}\]
with
\[\sum_{u\in\mathbb F_{q^2}\setminus\mathbb F_q}\sum_{w\in\mathbb F_{q^2}}\chi(\beta(u-u^q)w^2+(a^qu^q+bu)w+cu)=\eta(\N(\beta))(M_0-1)q^2,\]
and obtain the following result.

\begin{proposition}\label{N2}
Suppose $\beta\ne0$. Let $A_0=\beta^{-1}(b-a^q)^2-8c$, $A_1=\beta^{-1}(b^2-a^{2q})-4c$ and $A_2=\beta^{-1}(a^q+b)^2$. If $a^q+b\ne0$, then the number of zeros of $-\beta^qx^{2q}+\beta x^2+ax^q+bx+c$ in $\mathbb F_{q^2}$ is equal to
\begin{itemize}
\item $1+\eta(\N(\beta))\eta(\Tr(A_1)^2+\Tr(A_0(A_2^q-A_2)))$ if $A_2\notin\mathbb F_q$;
\item $1$ if $A_2\in\mathbb F_q$ and $\Tr(A_1)\ne0$;
\item $1-\eta(\N(\beta))$ if $A_2\in\mathbb F_q$, $\Tr(A_1)=0$ and $A_0\notin\mathbb F_q$;
\item $1+\eta(\N(\beta))(q-1)$ if $A_2\in\mathbb F_q$, $\Tr(A_1)=0$ and $A_0\in\mathbb F_q$.
\end{itemize}
If $a^q+b=0$, then that is
\begin{itemize}
\item $0$ if $\Tr(c)\ne0$;
\item $q-\eta(\N(\beta))$ if $\Tr(c)=0$ and $\beta^{-1}a^{2q}-\beta^{-q}a^2-4c\ne0$;
\item $q+\eta(\N(\beta))(q-1)$ if $\Tr(c)=\beta^{-1}a^{2q}-\beta^{-q}a^2-4c=0$.
\end{itemize}
\end{proposition}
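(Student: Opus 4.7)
My plan is to assemble the pieces that the preceding argument has already prepared. From the expression
$Mq^2=\sum_{u\in\mathbb F_q}\sum_{w\in\mathbb F_{q^2}}\chi(\cdot)+\sum_{u\in\mathbb F_{q^2}\setminus\mathbb F_q}\sum_{w\in\mathbb F_{q^2}}\chi(\cdot)$,
formula \eqref{eq14} gives the inner sum on the diagonal $u\in\mathbb F_q$, while \eqref{eq15} combined with the $u=j(k+\epsilon)$ parametrisation (argued exactly as in Subsection 2.1) yields
$\sum_{u\in\mathbb F_{q^2}\setminus\mathbb F_q}\sum_{w\in\mathbb F_{q^2}}\chi(\cdot)=\eta(\N(\beta))(M_0-1)q^2$,
where $M_0$ counts the $k\in\mathbb F_q$ at which the displayed trace vanishes, i.e.\ the zeros in $\mathbb F_q$ of the polynomial $P(x):=(A_2-A_2^q)\epsilon^{-1}x^2+2\Tr(A_1)x+(A_0-A_0^q)\epsilon$. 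Since $\epsilon^q=-\epsilon$ and each of $A_0-A_0^q,A_2-A_2^q$ is antifixed by Frobenius, one first checks that $P\in\mathbb F_q[x]$, after which the problem reduces to counting $M_0$ in every possible degeneration of $P$ and plugging back.

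In the case $a^q+b\ne0$ one has $A_2\ne0$ and the sum over $\mathbb F_q$ collapses to the single term $u=0$, contributing $q^2$; so one obtains the master formula $M=1+\eta(\N(\beta))(M_0-1)$. If $A_2\notin\mathbb F_q$ then $P$ is an honest quadratic and its discriminant equals $4\Tr(A_1)^2-4(A_2-A_2^q)(A_0-A_0^q)$; using the algebraic identity $(A_2-A_2^q)(A_0-A_0^q)=-\Tr(A_0(A_2^q-A_2))$ (both sides being the common value $A_0A_2^q+A_0^qA_2-A_0A_2-A_0^qA_2^q$), this rewrites as $4\Tr(A_1)^2+4\Tr(A_0(A_2^q-A_2))$, so $M_0=1+\eta(\Tr(A_1)^2+\Tr(A_0(A_2^q-A_2)))$ under the convention $\eta(0)=0$, giving the first bullet. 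If $A_2\in\mathbb F_q$, the leading coefficient of $P$ vanishes and $P$ is at most linear: when $\Tr(A_1)\ne0$ one has $M_0=1$; when $\Tr(A_1)=0$ the remaining constant $(A_0-A_0^q)\epsilon$ is nonzero or zero according as $A_0\notin\mathbb F_q$ or $A_0\in\mathbb F_q$, giving $M_0=0$ or $M_0=q$ respectively. Substitution into the master formula produces the other three bullets.

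In the case $a^q+b=0$ one has $A_2=0$, $A_1=-4c$, and $A_0=4\beta^{-1}a^{2q}-8c$. Formula \eqref{eq14} now contributes $q^2\chi(cu)$ for every $u\in\mathbb F_q$, so the diagonal sum equals $q^3$ when $\Tr(c)=0$ and $0$ otherwise. If $\Tr(c)\ne0$ then $P$ reduces to a nonzero linear polynomial, $M_0=1$, and altogether $M=0$. If $\Tr(c)=0$ then $c^q=-c$, so the constant of $P$ simplifies to $4(\beta^{-1}a^{2q}-\beta^{-q}a^2-4c)\epsilon$; accordingly $M_0=q$ when this quantity vanishes, yielding $M=q+\eta(\N(\beta))(q-1)$, and $M_0=0$ otherwise, yielding $M=q-\eta(\N(\beta))$. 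The only genuine obstacle is bookkeeping—verifying that the conditions $A_2\notin\mathbb F_q$, $A_0\notin\mathbb F_q$, etc.\ really partition the parameter space cleanly and that the discriminant identity holds—so no machinery beyond what was developed for Proposition \ref{N1} is required.
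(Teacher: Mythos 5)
Your proposal is correct and follows essentially the same route as the paper, which proves Proposition~\ref{N2} precisely by combining \eqref{eq14} for the diagonal $u\in\mathbb F_q$, \eqref{eq15} with the $u=j(k+\epsilon)$ parametrisation for the off-diagonal part, and the case analysis of the quadratic $(A_2-A_2^q)\epsilon^{-1}x^2+2\Tr(A_1)x+(A_0-A_0^q)\epsilon$ over $\mathbb F_q$. The only blemish is a sign slip in your parenthetical: the common value of $(A_2-A_2^q)(A_0-A_0^q)$ and $-\Tr(A_0(A_2^q-A_2))$ is $A_0A_2+A_0^qA_2^q-A_0A_2^q-A_0^qA_2$, the negative of what you wrote, but the identity you actually use (and hence the discriminant computation) is correct.
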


\section{Permutation Rational Functions over $\mathbb F_{q^2}$}

In this section, we consider
\[\frac{x^{q+1}+\alpha x^{2q}+\beta x^2+ax^q+bx+c}{x^q+x+d}\]
or
\[\frac{\alpha x^{2q}+\beta x^2+ax^q+bx+c}{x^q+x+d}\]
with coefficients in $\mathbb F_{q^2}$ and $d\notin\mathbb F_q$. The necessary and sufficient conditions for permutation rational functions of $\mathbb F_{q^2}$ in some specific forms will be given after the following lemma.

\begin{lemma}[\cite{williams1975note}]\label{cubic}
Let $g(x)=x^3+Ax^2+Bx+C$ be a cubic polynomial over $\mathbb F_q$, whose discriminant is
\[D=A^2B^2-4B^3-4A^3C-27C^2+18ABC.\]
If $q$ is odd and $D\ne0$, then $g$ has exactly one zero in $\mathbb F_q$ if and only if $\eta(D)=-1$. If $q$ is even, then $g$ has exactly one zero in $\mathbb F_q$ if and only if either $A^2+B=AB+C=0$, or $AB+C\ne0$ with $x^2+x+\frac{(A^2+B)^3}{(AB+C)^2}+1$ irreducible over $\mathbb F_q$.
\end{lemma}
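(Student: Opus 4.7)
The plan is to treat the odd- and even-characteristic cases separately.

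For odd $q$ with $D\neq0$, the cubic $g$ has three distinct roots in $\overline{\mathbb F_q}$, and the Galois group of its splitting field, being a cyclic subgroup of $S_3$, forces the factorization type to be one of: fully split ($3$ zeros in $\mathbb F_q$), linear factor times irreducible quadratic ($1$ zero), or irreducible ($0$ zeros). The Vandermonde $\Delta=\prod_{i<j}(r_i-r_j)$ satisfies $\Delta^2=D$ and is fixed by $A_3$ but negated by any transposition, so $\Delta\in\mathbb F_q$ iff the Galois group is contained in $A_3$, which happens exactly in the split and irreducible cases. Hence $\eta(D)=-1$ iff $g$ has exactly one zero in $\mathbb F_q$.

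For even $q$, I split on whether $A^2+B=0$. If $A^2+B=0$, then $B=A^2$ and $A^3=AB$, so $(x+A)^3=x^3+Ax^2+A^2x+A^3$ yields $g(x)=(x+A)^3+(AB+C)$; setting $y=x+A$ reduces the question to $y^3=AB+C$. When $AB+C=0$, the unique triple root $y=0$ gives exactly one zero. When $AB+C\neq0$, the equation has exactly one solution iff cubing is a bijection of $\mathbb F_q^*$, iff $\gcd(3,q-1)=1$, iff $x^2+x+1$ is irreducible over $\mathbb F_q$; since $(A^2+B)^3/(AB+C)^2=0$ in this subcase, this coincides with the stated criterion.

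If $A^2+B\neq0$, the Tschirnhaus substitution $x=tz+A$ with $t=\sqrt{A^2+B}$ (well-defined via the Frobenius inverse in characteristic $2$) transforms $g$ into $t^3(z^3+z+f)$ with $f=(AB+C)/(A^2+B)^{3/2}$. The case $f=0$ gives $z(z+1)^2$, which has two distinct zeros and is correctly excluded by the hypothesis $AB+C\neq0$ in condition~(ii). For $f\neq0$ the derivative $3z^2+1=1$ is nonzero, so roots are distinct, and if $z_1\in\mathbb F_q$ is a root then polynomial division gives $z^3+z+f=(z+z_1)(z^2+z_1z+1+z_1^2)$, whose quadratic factor splits over $\mathbb F_q$ iff $\mathrm{Tr}_{\mathbb F_q/\mathbb F_2}((1+z_1^2)/z_1^2)=0$. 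Using $f=z_1(z_1+1)^2$, the partial fraction $1/f=1/z_1+1/(z_1+1)+1/(z_1+1)^2$, and $\mathrm{Tr}(s^2)=\mathrm{Tr}(s)$ in characteristic $2$, I obtain
\[\mathrm{Tr}_{\mathbb F_q/\mathbb F_2}(1/z_1^2+1)=\mathrm{Tr}_{\mathbb F_q/\mathbb F_2}((A^2+B)^3/(AB+C)^2+1),\]
independent of the chosen root. Thus, whenever $z^3+z+f$ has a root in $\mathbb F_q$, the number of roots is $1$ iff $x^2+x+(A^2+B)^3/(AB+C)^2+1$ is irreducible over $\mathbb F_q$.

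The main obstacle is ruling out the case of no roots: I must show that if $z^3+z+f$ has no zero in $\mathbb F_q$, then $\mathrm{Tr}_{\mathbb F_q/\mathbb F_2}(1/f^2+1)=0$, so that the irreducibility criterion genuinely isolates the one-root case rather than admitting a false positive. For this I pass to the splitting field $\mathbb F_{q^3}$, which contains all three roots $\alpha_1,\alpha_2,\alpha_3$. From Vieta ($e_1=0$, $e_2=1$, $e_3=f$) I get $\sum 1/\alpha_i=1/f$ and $\sum_{i<j}1/(\alpha_i\alpha_j)=0$, hence $\mathrm{Tr}_{\mathbb F_{q^3}/\mathbb F_q}(1/\alpha^2)=\sum 1/\alpha_i^2=1/f^2$. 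Combined with $\mathrm{Tr}_{\mathbb F_{q^3}/\mathbb F_q}(1)=3=1$ in characteristic $2$ and the fact that the quadratic factor splits over $\mathbb F_{q^3}$ (forcing $\mathrm{Tr}_{\mathbb F_{q^3}/\mathbb F_2}(1+1/\alpha^2)=0$), trace transitivity yields $\mathrm{Tr}_{\mathbb F_q/\mathbb F_2}(1/f^2+1)=0$, completing the equivalence.
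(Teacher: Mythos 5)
The paper does not prove this lemma at all: it is imported verbatim from Williams' note \cite{williams1975note}, so your argument can only be measured against the cited source, not against anything in the paper. Your proof is correct and is essentially the classical argument. For odd $q$, the observation that Frobenius acts as an even permutation of the three distinct roots exactly when $\prod_{i<j}(r_i-r_j)\in\mathbb F_q$, i.e.\ when $\eta(D)=1$, settles the claim. For even $q$, the reduction via $x=tz+A$, $t^2=A^2+B$, to $z^3+z+f$ with $1/f^2=(A^2+B)^3/(AB+C)^2$, the identity $\mathrm{Tr}_{\mathbb F_q/\mathbb F_2}(1/z_1^2+1)=\mathrm{Tr}_{\mathbb F_q/\mathbb F_2}(1/f^2+1)$ obtained from the partial-fraction decomposition of $1/f=1/(z_1(z_1+1)^2)$, and the passage to $\mathbb F_{q^3}$ to show $\mathrm{Tr}_{\mathbb F_q/\mathbb F_2}(1/f^2+1)=0$ when the cubic is irreducible, together give a genuine two-way equivalence; that last step is the one most often omitted, and you handle it correctly with trace transitivity and $\mathrm{Tr}_{\mathbb F_{q^3}/\mathbb F_q}(1/\alpha^2)=e_2^2/e_3^2=1/f^2$. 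One local error: in characteristic $2$ the derivative of $z^3+z+f$ is $3z^2+1=z^2+1=(z+1)^2$, not $1$ (you computed as if the characteristic were $3$). The conclusion you want --- separability for $f\ne0$ --- still holds, because the only root of $(z+1)^2$ is $z=1$, which is a root of $z^3+z+f$ only when $f=0$; but the justification as written is wrong and should be replaced by this gcd observation. With that repair the proof is complete.
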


\begin{theorem}
Let $q$ be odd, $\gamma=a^2-ab+c$, $\delta=a-b+d$ and $\epsilon=a^q-a-b^q+b$. Then 
\[\frac{x^{q+1}+x^2+ax^q+bx+c}{x^q+x+d}\]
is a permutation rational function of $\mathbb F_{q^2}$ if and only if $\gamma\in\{0,-\epsilon^2\}$, $\delta=0$ and $\epsilon\ne0$.
\end{theorem}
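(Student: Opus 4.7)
The plan is to reduce the permutation condition to a zero-count via the observation in the introduction, apply Proposition~\ref{N1}, and then extract the conditions on $\gamma$, $\delta$, $\epsilon$ from the discriminant of the resulting cubic. Specifically, $f$ permutes $\mathbb F_{q^2}$ iff for every $t\in\mathbb F_{q^2}$ the polynomial $g(x)+t(x^q+x+d)=x^{q+1}+x^2+(a+t)x^q+(b+t)x+(c+td)$ has exactly one zero in $\mathbb F_{q^2}$. A direct check shows that under $(a,b,c)\mapsto(a+t,b+t,c+td)$ both $\epsilon$ and $\delta$ are invariant, $\gamma$ becomes $\gamma+t\delta$, and $(a+t)^q+(b+t)=a^q+b+\Tr(t)$ lies in $\mathbb F_q$ for every $t$ iff $\epsilon=0$. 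I first rule out $\epsilon=0$: Proposition~\ref{N1}'s second bullet applies uniformly in that case, and using that $\Tr(td)$ varies independently of $\Tr(t)$ over $\mathbb F_q$ (because $d\notin\mathbb F_q$), together with the fact that $\gamma+t\delta$ vanishes for some $t$ whenever $\delta\ne0$ (which forces $M_0'=q$), a short case analysis on $(\gamma,\delta)$ shows the count cannot be identically $1$.

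Assume $\epsilon\ne0$. Then the first bullet of Proposition~\ref{N1} applies for every $t$, and $f$ is a permutation iff the cubic
\[P_t(x)=x^3-Tx^2+(4\Tr(\gamma+t\delta)-\epsilon^2)x+4\Tr((\gamma+t\delta)\epsilon)+T\epsilon^2,\quad T=\Tr(3a-b+2t)\in\mathbb F_q,\]
has exactly one root in $\mathbb F_q$ for every $t$. For sufficiency under $\delta=0$, $\gamma\in\{0,-\epsilon^2\}$: $\gamma+t\delta=\gamma$ is constant. If $\gamma=0$, $P_t(x)=(x-T)(x^2-\epsilon^2)$ has unique $\mathbb F_q$-root $T$ since $\pm\epsilon\notin\mathbb F_q$. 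If $\gamma=-\epsilon^2$, using $\Tr(-\epsilon^2)=-2\epsilon^2$ and $\Tr(-\epsilon^3)=0$ the cubic becomes $P_t(x)=x^3-Tx^2-9\epsilon^2x+T\epsilon^2$, whose discriminant computes to $4\epsilon^2(T^2+27\epsilon^2)^2$; since $\epsilon\in\mathbb F_{q^2}\setminus\mathbb F_q$ forces $\epsilon^2$ to be a nonsquare in $\mathbb F_q$, Lemma~\ref{cubic} yields one root when $T^2+27\epsilon^2\ne0$, and a direct check gives $P_t=(x-T/3)^3$ in the remaining case.

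For necessity, first fix $\delta=0$: then $T$ ranges freely over $\mathbb F_q$, and Lemma~\ref{cubic} requires $\eta(D(T))=-1$ for every $T$ with $D(T)\ne0$ (plus a triple-root condition at zeros of $D$). Expanding,
\[D(T)=4\epsilon^2T^4+16vT^3+(B^2-18\epsilon^2B-27\epsilon^4)T^2-144v(\epsilon^2+2u)T-4B^3-432v^2\]
with $u=\Tr\gamma$, $v=\Tr(\gamma\epsilon)$, $B=4u-\epsilon^2$. A polynomial of degree $\le4$ whose value is a nonsquare (or zero) on every $T\in\mathbb F_q$ must (for $q$ large via Weil's bound on character sums, with small $q$ handled separately) take the form $\lambda h(T)^2$ with $\eta(\lambda)=-1$. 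Matching coefficients forces $v=0$ from the $T^3$ term, and then the $T^0$--$T^2$--$T^4$ compatibility reduces to the identity $(B^2-18\epsilon^2B-27\epsilon^4)^2+64\epsilon^2B^3=(B+\epsilon^2)(B+9\epsilon^2)^3=0$, yielding $B\in\{-\epsilon^2,-9\epsilon^2\}$ and hence $\gamma\in\{0,-\epsilon^2\}$. To force $\delta=0$, assume $\delta\ne0$ and restrict to the $q$-element subfamily $\{t:\gamma+t\delta\in\mathbb F_q\}$; therein $T$ is affine in $\gamma':=\gamma+t\delta$, and a parallel discriminant-factorization analysis of $D$ viewed as a polynomial in $\gamma'$ yields a contradiction with the bijectivity of the rational map $\alpha\mapsto\gamma'(\alpha)$ encoding the unique root. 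The technical heart is the factorization $(B^2-18\epsilon^2B-27\epsilon^4)^2+64\epsilon^2B^3=(B+\epsilon^2)(B+9\epsilon^2)^3$, which isolates exactly the two special values of $\gamma$; the main obstacle is the ``$\eta(D)=-1$ implies $D=\lambda h^2$'' reduction, clean for large $q$ via the Weil bound but requiring verification of a finite list of small $q$ separately.
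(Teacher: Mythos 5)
Your overall strategy is the same as the paper's: reduce to counting zeros of $x^{q+1}+x^2+(a+t)x^q+(b+t)x+(c+dt)$, invoke Proposition~\ref{N1}, dispose of $\epsilon=0$ via the second part of that proposition, and for $\epsilon\ne0$ analyze the discriminant of the cubic through Lemma~\ref{cubic} and the Weil bound, with the key factorization $(k_0+1)(k_0+9)^3$ pinning down $\gamma\in\{0,-\epsilon^2\}$. Your sufficiency argument is sound (the direct factorization $(x-T)(x^2-\epsilon^2)$ for $\gamma=0$ is in fact a little cleaner than checking $\eta(D_t)=-1$). However, there are two genuine gaps in the necessity direction. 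First, the claim that ``matching coefficients forces $v=0$ from the $T^3$ term'' is false as stated: writing $D=\lambda h(T)^2$ with $h(T)=2T^2+\tau_1T+\tau_0$, the cubic coefficient only gives $\epsilon^2\tau_1=\pm4\Tr(\gamma\epsilon)$, i.e.\ it ties $\tau_1$ to $v$ rather than killing $v$. One must separately rule out $\tau_1\ne0$ by combining the remaining coefficient identities (the paper's equations \eqref{eq10}--\eqref{eq12} yield $4(k_0+9)^3=0$ and then $\tau_1^2=\epsilon^2(k_0+9)^2=0$, a contradiction); only then does $\tau_1=0$ give $\Tr(\gamma\epsilon)=0$ and the quartic identity you quote.

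Second, your exclusion of $\delta\ne0$ is not an argument that can be checked. Restricting to the $q$-element family $\{t:\gamma+t\delta\in\mathbb F_q\}$ is a reasonable idea (there $\Tr((\gamma+t\delta)\epsilon)=0$ and the discriminant becomes a polynomial of degree at most $4$ in $\gamma'$), but ``a parallel discriminant-factorization analysis \dots yields a contradiction with the bijectivity of the rational map $\alpha\mapsto\gamma'(\alpha)$'' does not identify what is being contradicted; after the Weil bound you are still left with the possibility that this discriminant is a constant times a perfect square, and you must show that cannot happen (or that it forces a $t$ with more than one root), which is exactly where the work lies. Note also that $T$ need not vary on your subfamily (it is constant when $\Tr(1/\delta)=0$), so the shape of the discriminant in $\gamma'$ degenerates in a subcase. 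The paper instead splits on $\delta\notin\mathbb F_q$ (where $D_t$ has exact degree $3$ in $t_1=\Tr(\delta t)$, so it is never a square times a constant and Weil applies directly) and $\delta\in\mathbb F_q\setminus\{0\}$ (where $D_t$ is quadratic in $t_1=\Tr(\epsilon t)$ and one must check, via a degree-$6$ polynomial in $t_0$, that its discriminant is nonzero for some $t_0$). Some version of this case analysis, together with the finite verification for small $q$, is needed to make your sketch complete.
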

\begin{proof}
Consider the number of zeros of
\[x^{q+1}+x^2+ax^q+bx+c+t(x^q+x+d)=x^{q+1}+x^2+(a+t)x^q+(b+t)x+c+dt\]
in $\mathbb F_{q^2}$ for $t\in\mathbb F_{q^2}$. Note that $a^q+b\in\mathbb F_q$ if and only if $(a+t)^q+b+t\in\mathbb F_q$, for every $t\in\mathbb F_{q^2}$.

First, assume $a^q+b\notin\mathbb F_q$. By Proposition \ref{N1}, the rational function permutes $\mathbb F_{q^2}$ if and only if
\begin{equation}\label{eq9}x^3-\Tr(3a-b+2t)x^2+(4\Tr(\gamma+\delta t)-\epsilon^2)x+4\Tr((\gamma+\delta t)\epsilon)+\Tr(3a-b+2t)\epsilon^2\end{equation}
has exactly one zero in $\mathbb F_q$ for every $t\in\mathbb F_{q^2}$. The discriminant of the above cubic polynomial is
\[D_t=A_t^2B_t^2-4B_t^3-4A_t^3C_t-27C_t^2+18A_tB_tC_t,\]
where $A_t=-\Tr(3a-b+2t)$, $B_t=4\Tr(\gamma+\delta t)-\epsilon^2$ and $C_t=4\Tr((\gamma+\delta t)\epsilon)+\Tr(3a-b+2t)\epsilon^2$. Provided that $D_t\ne0$, the polynomial \eqref{eq9} has one zero in $\mathbb F_q$ if and only if $\eta(D_t)=-1$, as a result of Lemma \ref{cubic}. Therefore, the rational function permutes $\mathbb F_{q^2}$ only if $\eta(D_t)=-1$ for every $t\in\mathbb F_{q^2}$ such that $D_t\ne0$.

Let $\delta\ne0$ and $q>9$. If $\delta\notin\mathbb F_q$, then the map $t\in\mathbb F_{q^2}\mapsto(t_0,t_1)\in\mathbb F_q\times\mathbb F_q$, where $t_0=\Tr(t)$ and $t_1=\Tr(\delta t)$, is surjective. In this case, fix some $t_0\in\mathbb F_q$ and view $A_t$, $B_t$ and $C_t$ as polynomials in $t_1$ over $\mathbb F_q$, in which sense $\deg(A_t)=0$, $\deg(B_t)=1$ and $\deg(C_t)\le1$. Thus $\deg(D_t)=3$ and the number of $t_1\in\mathbb F_q$ such that $D_t=0$ is at most $3$. If for every $t_1\in\mathbb F_q$ the polynomial \eqref{eq9} has exactly one zero in $\mathbb F_q$, then
\[\left|\sum_{t_1\in\mathbb F_q}\eta(D_t)\right|\ge q-3,\]
while by the well-known Weil bound for character sums (\cite[Theorem 5.41]{lidl1997}),
\[\left|\sum_{t_1\in\mathbb F_q}\eta(D_t)\right|\le 2\sqrt q< q-3.\]
Suppose $\delta\in\mathbb F_q$. Similarly, the map $t\in\mathbb F_{q^2}\mapsto(t_0,t_1)\in\mathbb F_q\times\mathbb F_q$, where $t_0=\Tr(t)$ and $t_1=\Tr(\epsilon t)$, is surjective. Fix arbitrary $t_0\in\mathbb F_q$ and we have
\[A_t=-\Tr(3a-b)-2t_0,\]
\[B_t=4\Tr(\gamma)-\epsilon^2+4\delta t_0,\]
\[C_t=4\Tr(\gamma\epsilon)+4\delta t_1+\Tr(3a-b)\epsilon^2+2\epsilon^2t_0.\]
Now $D_t$ is a quadratic polynomial in $t_1$ over $\mathbb F_q$ with $\deg(A_t)=\deg(B_t)=0$ and $\deg(C_t)=1$, whose discriminant (via the map $C_t\mapsto t_1$) is
\[(-4A_t^3+18A_tB_t)^2+108(A_t^2B_t^2-4B_t^3).\]
This, written as a polynomial in $t_0$ over $\mathbb F_q$, has degree $6$. Since $q>6$, let $t_0$ be such that the discriminant is nonzero, and thus
\[\left|\sum_{t_1\in\mathbb F_q}\eta(D_t)\right|\le\sqrt q< q-2.\]
This implies that $\eta(D_t)=1$ for some $t\in\mathbb F_{q^2}$ and there is no such permutation rational function.

Let $\delta=0$ and $q>16$. Then $B_t=4\Tr(\gamma)-\epsilon^2=\epsilon^2k_0$ for some $k_0\in\mathbb F_q$, $C_t=4\Tr(\gamma\epsilon)-\epsilon^2A_t$ and
\[\begin{split}D_t&=4\epsilon^2A_t^4-16\Tr(\gamma\epsilon)A_t^3+(B_t^2-18\epsilon^2B_t-27\epsilon^4)A_t^2\\&\mathrel{\phantom{=}}+72\Tr(\gamma\epsilon)(B_t+3\epsilon^2)A_t-4B_t^3-432\Tr(\gamma\epsilon)^2\\&=4\epsilon^2A_t^4-16\Tr(\gamma\epsilon)A_t^3+\epsilon^4(k_0^2-18k_0-27)A_t^2\\&\mathrel{\phantom{=}}+72\epsilon^2\Tr(\gamma\epsilon)(k_0+3)A_t-4\epsilon^6k_0^3-432\Tr(\gamma\epsilon)^2.\end{split}\]
Note that $A_t$ runs through $\mathbb F_q$ as $t$ runs through $\mathbb F_{q^2}$, so consider the polynomial over $\mathbb F_q$:
\[\begin{split}D^\prime(x)&=4\epsilon^2x^4-16\Tr(\gamma\epsilon)x^3+\epsilon^4(k_0^2-18k_0-27)x^2\\&\mathrel{\phantom{=}}+72\epsilon^2\Tr(\gamma\epsilon)(k_0+3)x-4\epsilon^6k_0^3-432\Tr(\gamma\epsilon)^2.\end{split}\]
If it is not a square of any quadratic polynomial over $\mathbb F_q$ multiplied by a constant, then again by the Weil bound,
\[\left|\sum_{t\in\mathbb F_q}\eta(D^\prime(t))\right|\le 3\sqrt q< q-4.\]
Hence, to induce a permutation rational function, one must have $D^\prime(x)=\epsilon^2(2x^2+\tau_1x+\tau_0)^2$ for some $\tau_1,\tau_0\in\mathbb F_q$. Comparing the coefficients, we get $\epsilon^2\tau_1=-4\Tr(\gamma\epsilon)$, as well as
\begin{equation}\label{eq10}\epsilon^2(\tau_1^2+4\tau_0)=\epsilon^4(k_0^2-18k_0-27),\end{equation}
\begin{equation}\label{eq11}2\epsilon^2\tau_1\tau_0=72\epsilon^2\Tr(\gamma\epsilon)(k_0+3),\end{equation}
and
\begin{equation}\label{eq12}\epsilon^2\tau_0^2=-4\epsilon^6k_0^3-432\Tr(\gamma\epsilon)^2.\end{equation}
Suppose $\tau_1\ne0$. Then $\tau_0=-9\epsilon^2(k_0+3)$ by \eqref{eq11}, and with \eqref{eq10},
\[\tau_1^2=\epsilon^2(k_0^2-18k_0-27)-4\tau_0=\epsilon^2(k_0^2+18k_0+81).\]
Moreover, we have
\[27\epsilon^4\tau_1^2=432\Tr(\gamma\epsilon)^2=-4\epsilon^6k_0^3-\epsilon^2\tau_0^2=-\epsilon^6(4k_0^3+81(k_0+3)^2)\]
by \eqref{eq12}, and thus,
\[4k_0^3+81(k_0+3)^2+27(k_0^2+18k_0+81)=-27\epsilon^{-2}\tau_1^2+27(k_0^2+18k_0+81)=0.\]
This means
\[4(k_0+9)^3=4k_0^3+81(k_0+3)^2+27(k_0^2+18k_0+81)=0\]
and $\tau_1^2=\epsilon^2(k_0^2+18k_0+81)=0$, a contradiction. Accordingly, let $\tau_1=0$, so that $\Tr(\gamma\epsilon)=0$, $4\tau_0=\epsilon^2(k_0^2-18k_0-27)$ and $\tau_0^2=-4\epsilon^4k_0^3$ by \eqref{eq10} and \eqref{eq12}. A simple calculation yields
\[(k_0+1)(k_0+9)^3=(k_0^2-18k_0-27)^2+64k_0^3=0.\]
Recall that $4\Tr(\gamma)-\epsilon^2=\epsilon^2k_0$, so $\Tr(\gamma)\in\{0,-2\epsilon^2\}$, while $\Tr(\gamma\epsilon)=0$. Since $\epsilon\not\mathbb F_q$, it is easy to see that $\gamma\in\{0,-\epsilon^2\}$. Conversely, if $\gamma=0$, then
\[D_t=4\epsilon^2(A_t^4-2\epsilon^2A_t^2+\epsilon^4)=4\epsilon^2(A_t^2-\epsilon^2)^2,\]
and $\eta(D_t)=-1$ for every $t\in\mathbb F_{q^2}$ since $\eta(\epsilon^2)=-1$ and $A_t\in\mathbb F_q$. If $\gamma=-\epsilon^2$, then
\[D_t=4\epsilon^2(A_t^4+54\epsilon^2A_t^2+9^3\epsilon^6)=4\epsilon^2(A_t^2+27\epsilon^2)^2,\]
and when $D_t=0$ the polynomial \eqref{eq9} becomes
\[x^3+A_tx^2-9\epsilon^2x-\epsilon^2A_t=x^3+A_tx^2+\frac{A_t^2}3x+\frac{A_t^3}{27}=\left(x+\frac{A_t}3\right)^3\]
if the characteristic of $\mathbb F_q$ is not $3$, and becomes $x^3$ otherwise. In either case, the polynomial \eqref{eq9} has only one zero in $\mathbb F_q$ for every $t\in\mathbb F_{q^2}$, and subsequently, this induces a permutation rational function of $\mathbb F_{q^2}$. The rest ($\delta\ne0$, $q\le9$ and $\delta=0$, $q<16$) can be verified by exhaustive search.

Assume $a^q+b\in\mathbb F_q$. It follows from Proposition \ref{N1} that the number of zeros of
\[x^{q+1}+x^2+(a+t)x^q+(b+t)x+c+dt\]
in $\mathbb F_{q^2}$ is
\[M=\eta((a^q+b)^2-4\Tr(c)+\Tr(t)^2+2(a^q+b)\Tr(t)-4\Tr(dt))+M_0,\]
where
\[M_0=\begin{cases}1&\text{if }\Tr(\gamma+\delta t)\ne0,\\q&\text{if }\gamma+\delta t=0,\\0&\text{otherwise.}\end{cases}\]
If $\delta\ne0$, then there exists $t\in\mathbb F_{q^2}$ such that $\gamma+\delta t=0$, in which case $M>1$. If $\delta=0$, then $M_0$ is independent of $t$, while $\eta((a^q+b)^2-4\Tr(c)+\Tr(t)^2+2(a^q+b)\Tr(t)-4\Tr(dt))$ is not, as easily seen. In either case $M\ne1$ for some $t\in\mathbb F_{q^2}$. This completes the proof.
\end{proof}

Consider
\[\frac{x^{q+1}+\alpha x^{2q}+\beta x^2+ax^q+bx+c}{x^q+x+d}\]
with $\alpha+\beta=1$ and $\N(\alpha)\ne\N(\beta)$. Let $L(x)=\alpha x^q-\beta^qx$, so that
\[\begin{split}&\mathrel{\phantom{=}}L(x)^{q+1}+\alpha L(x)^{2q}+\beta L(x)^2\\&=(\N(\alpha)-2\alpha\beta+\N(\beta))x^{q+1}+\alpha\beta(\alpha+\beta-1)x^{2q}+(\alpha^{2q+1}+\beta^{2q+1}-\alpha^q\beta^q)x^2.\end{split}\]
Then
\[\begin{split}&\mathrel{\phantom{=}}\frac{L(x)^{q+1}+\alpha L(x)^{2q}+\beta L(x)^2+aL(x)^q+bL(x)+c}{L(x)^q+L(x)+d}\\&=\frac{(\N(\alpha)-2\alpha\beta+\N(\beta))x^{q+1}+(\alpha^{2q+1}+\beta^{2q+1}-\alpha^q\beta^q)x^2+aL(x)^q+bL(x)+c}{(\alpha-\beta)x^q+(\alpha^q-\beta^q)x+d}.\end{split}\]
Substitute $\frac x{\alpha^q-\beta^q}$ for $x$ to get
\[\frac{\frac{\N(\alpha)-2\alpha\beta+\N(\beta)}{(\alpha^q-\beta^q)^{q+1}}x^{q+1}+\frac{\alpha^{2q+1}+\beta^{2q+1}-\alpha^q\beta^q}{(\alpha^q-\beta^q)^2}x^2+aL\left(\frac x{\alpha^q-\beta^q}\right)^q+bL\left(\frac x{\alpha^q-\beta^q}\right)+c}{x^q+x+d},\]
where
\[\frac{\N(\alpha)-2\alpha\beta+\N(\beta)}{(\alpha^q-\beta^q)^{q+1}}=\frac{\alpha^{2q+1}+\beta^{2q+1}-\alpha^q\beta^q}{(\alpha^q-\beta^q)^2},\]
since
\[\begin{split}&\mathrel{\phantom{=}}(\alpha^{2q+1}+\beta^{2q+1}-\alpha^q\beta^q)(\alpha-\beta)-(\N(\alpha)-2\alpha\beta+\N(\beta))(\alpha^q-\beta^q)\\&=\alpha^{2q+2}+\alpha\beta^{2q+1}-\alpha^{2q+1}\beta-\beta^{2q+2}-\N(\alpha)\beta^q+\alpha^q\N(\beta)\\&\mathrel{\phantom{=}}-(\alpha^{2q+1}-2\N(\alpha)\beta+2\alpha\N(\beta)-\beta^{2q+1}-\N(\alpha)\beta^q+\alpha^q\N(\beta))\\&=\alpha^{2q+2}+\alpha\beta^{2q+1}-\alpha^{2q+1}\beta-\beta^{2q+2}-\alpha^{2q+1}+2\N(\alpha)\beta-2\alpha\N(\beta)+\beta^{2q+1}\\&=(\alpha+\beta-1)(\alpha^{2q+1}-\beta^{2q+1})+2\alpha\beta^{2q+1}-2\alpha^{2q+1}\beta+2\N(\alpha)\beta-2\alpha\N(\beta)\\&=(\alpha+\beta-1)(\alpha^{2q+1}-\beta^{2q+1})+2\alpha\beta(\alpha+\beta-1)^q(\beta-\alpha)^q\\&=0.\end{split}\]
Therefore, the above results can be applied in this case for permutation rational functions.

Similarly, Proposition \ref{N1} can be used for another form of rational functions as follows.

\begin{proposition}
Let $q$ be odd, $\beta\ne1$ and $\N(\beta)=1$. Then
\[\frac{x^{q+1}+\beta x^2+ax^q+bx+c}{x^q+x+d}\]
is not a permutation rational function of $\mathbb F_{q^2}$.
\end{proposition}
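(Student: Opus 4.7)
Since $\N(\beta)=1$, the map $\mathbb F_{q^2}^*\to\{x:\N(x)=1\}$, $\mu\mapsto\mu^{q-1}$, is surjective, so we may fix $\mu\in\mathbb F_{q^2}^*$ with $\mu^{q-1}=\beta$; because $\beta\ne 1$ we have $\mu\notin\mathbb F_q$. The plan is to substitute $x=\mu y$ in
\[g(x)+th(x)=x^{q+1}+\beta x^2+(a+t)x^q+(b+t)x+(c+td),\]
whose number of zeros in $\mathbb F_{q^2}$ (as $t$ ranges over $\mathbb F_{q^2}$) controls whether the rational function is a permutation. Using $\beta\mu^2=\mu^{q+1}$, this becomes $\mu^{q+1}\bigl(y^{q+1}+y^2+A_ty^q+B_ty+C_t\bigr)$ with
\[A_t=\tfrac{a+t}{\mu},\quad B_t=\tfrac{b+t}{\mu^q},\quad C_t=\tfrac{c+td}{\mu^{q+1}}.\]
Since $y\mapsto\mu y$ is bijective, Proposition \ref{N1} applies directly to the bracketed polynomial, and it suffices to exhibit some $t$ for which the polynomial fails to have exactly one zero in $\mathbb F_{q^2}$.

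A short calculation gives $A_t^q+B_t=(a^q+b+\Tr(t))/\mu^q$. Since $\mu\notin\mathbb F_q$, the $\mathbb F_q$-subspaces $\mathbb F_q$ and $\mu^q\mathbb F_q$ are distinct, so their intersection is $\{0\}$ and their sum is $\mathbb F_{q^2}$; hence there is a unique $u_0\in\mathbb F_q$ with $A_t^q+B_t\in\mathbb F_q$ iff $\Tr(t)=u_0$. For such $t$ (Case I of Proposition \ref{N1}) the zero count is $\eta\bigl((A_t^q+B_t)^2-4\Tr(C_t)\bigr)+M_0$, and $M_0=q$ precisely when $\gamma_t:=A_t^2-A_tB_t+C_t=0$. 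Multiplying $\gamma_t=0$ by $\mu^{q+1}$ yields the quadratic
\[(\beta-1)t^2+(2a\beta-a-b+d)t+(\beta a^2-ab+c)=0\]
in $t$, which (since $\beta\ne 1$) has at most two roots in $\mathbb F_{q^2}$. If either root $t^*$ also satisfies $\Tr(t^*)=u_0$, then $M_0=q$ at that value, forcing the zero count to be at least $q-1>1$, and we are done.

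Otherwise I would turn to the remaining $q^2-q$ values of $t$ (Case II of Proposition \ref{N1}), where the zero count equals the number of zeros in $\mathbb F_q$ of the cubic from Proposition \ref{N1}, whose discriminant I denote $D_t$. Parametrizing $t=t_0+t_1\nu$ with $\nu\in\mathbb F_{q^2}\setminus\mathbb F_q$ of trace $0$ and $t_0,t_1\in\mathbb F_q$ (so $\Tr(t)=2t_0$), I would pick some $t_0\ne u_0/2$ and view $D_t$ as a polynomial in $t_1$ over $\mathbb F_q$ of degree at most six. Provided this polynomial is not a constant multiple of a square in $\mathbb F_q[t_1]$, the Weil bound \cite[Theorem 5.41]{lidl1997} gives $|\sum_{t_1\in\mathbb F_q}\eta(D_t)|\le 5\sqrt q$, which is strictly smaller than $q-O(1)$ for large $q$; this contradicts the requirement from Lemma \ref{cubic} that $\eta(D_t)=-1$ whenever $D_t\ne 0$. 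The main obstacle is the careful algebra needed to confirm that $D_t$ is not a constant times a square in the generic range and to handle the finitely many exceptional configurations of $(a,b,c,d,\beta)$ in which both roots of the above quadratic avoid the hyperplane $\Tr(t)=u_0$ \emph{and} $D_t$ happens to be a square (so that Weil is unavailable); these, together with small $q$, should be dispatched by an ad hoc analysis and an exhaustive search exactly as in the proof of the previous theorem, perhaps after first normalizing $a=0$ via the input translation $x\to x-a$ to reduce the number of parameters.
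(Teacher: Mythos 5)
Your reduction is sound and coincides with the paper's: substituting $x=\mu y$ with $\mu^{q-1}=\beta$ turns $g(x)+t(x^q+x+d)$ into a polynomial of the first kind, and your identification of the unique trace value $u_0$ for which $A_t^q+B_t\in\mathbb F_q$ is correct. The problem is what you do with it. Your Case I analysis exploits only the sub-case $M_0=q$, which requires a root of your quadratic in $t$ to land on the hyperplane $\Tr(t)=u_0$ and therefore need not occur; you then fall back on a Weil-bound argument over the $q^2-q$ values of $t$ in Case II that you explicitly leave unfinished. You have not shown that $D_t$ fails to be a constant times a square, and the exceptional configurations are not obviously ``finitely many'' in any uniform sense --- they are potential families of $(a,b,c,d,\beta)$, each of which would need the kind of extended analysis the paper devotes to its Theorem. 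As written, the argument does not close.

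The gap is avoidable, and you are one observation away from the paper's proof, entirely within Case I. On the coset $\{t:\Tr(t)=u_0\}$ the quantity $(A_t^q+B_t)^2$ is constant, while $\Tr(C_t)=\Tr\bigl(\mu^{-(q+1)}(c+dt)\bigr)$ takes \emph{every} value of $\mathbb F_q$ as $t$ runs over that coset: since $\mu^{q+1}=\N(\mu)\in\mathbb F_q^*$ and $d\notin\mathbb F_q$, we have $\mu^{-(q+1)}d\notin\mathbb F_q$, so $t\mapsto\Tr(\mu^{-(q+1)}dt)$ is nonconstant (hence surjective) on the coset. Therefore some $t$ with $\Tr(t)=u_0$ makes $(A_t^q+B_t)^2-4\Tr(C_t)$ a nonsquare, and by Proposition \ref{N1} the zero count equals $-1+M_0$ with $M_0\in\{0,1,q\}$, which is never $1$ for odd $q\ge3$. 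No appeal to Case II, to the cubic discriminant, or to the Weil bound is needed.
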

\begin{proof}
Let $\beta=\omega^{1-q}$ for some $\omega\in\mathbb F_{q^2}$. For $t\in\mathbb F_{q^2}$, substituting $\omega^{-1}x$ for $x$ in
\[x^{q+1}+\beta x^2+ax^q+bx+c+t(x^q+x+d)\]
and multiplying it by $\omega^{q+1}$, we get
\[x^{q+1}+x^2+\omega(a+t)x^q+\omega^q(b+t)x+\omega^{q+1}(c+dt).\]
Since $\beta\ne1$, there exists $t\in\mathbb F_{q^2}$ such that
\[(\omega^q-\omega)\Tr(t)=\omega(a+b^q)-\omega^q(a^q+b),\]
which means
\[\omega^q(a^q+b+\Tr(t))=\omega(a+b^q+\Tr(t)).\]
For all such $t$, clearly $\Tr(\omega^{q+1}(c+dt))$ takes all values in $\mathbb F_q$ as $\omega^{q+1}d\notin\mathbb F_q$. Therefore,  we have
\[\omega^q(a+t)^q+\omega^q(b+t)=\omega^q(a^q+b+\Tr(t))\in\mathbb F_q,\]
and
\[\eta((\omega^q(a+t)^q+\omega^q(b+t))^2-4\Tr(\omega^{q+1}(c+dt)))=-1\]
for some $t\in\mathbb F_{q^2}$, and it follows from Proposition \ref{N1} that the number of zeros of
\[x^{q+1}+x^2+\omega(a+t)x^q+\omega^q(b+t)x+\omega^{q+1}(c+dt)\]
in $\mathbb F_{q^2}$ is not one.
\end{proof}

In the last section, the number of zeros of $x^{q+1}+x^2+ax^q+bx+c$ in $\mathbb F_{q^2}$ is determined using character sums, only in the case of odd $q$. In fact, there is another method for any characteristic, which, however, can not be applied to polynomials of more general forms.

\begin{lemma}
If $a-b\notin\mathbb F_q$, then the number of zeros of $x^{q+1}+x^2+ax^q+bx+c$ in $\mathbb F_{q^2}$ is equal to the number of zeros of
\[x^3+\Tr(b)x^2+(\Tr(c)-\N(a)+\N(b))x+\Tr((b-a)^q(a^2-ab+c))+\Tr(a)\N(b-a)\]
in $\mathbb F_q$. If $a-b\in\mathbb F_q$ and $\Tr(a^2-ab+c)\ne0$, then it is the number of zeros of
\[x^2+(a^q+b)x+\Tr(c)\]
in $\mathbb F_q$. If $a-b\in\mathbb F_q$ and $\Tr(a^2-ab+c)=0$, then it is
\[\begin{cases}1&\text{if }a^2-ab+c\ne0\text{ and }a^q+2a-b\ne0,\\0&\text{if }a^2-ab+c\ne0\text{ and }a^q+2a-b=0,\\q+1&\text{if }a^2-ab+c=0\text{ and }a^q+2a-b\ne0,\\q&\text{if }a^2-ab+c=0\text{ and }a^q+2a-b=0.\end{cases}\]
\end{lemma}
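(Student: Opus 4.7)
The plan hinges on the algebraic identities $x^2 = \Tr(x)x - \N(x)$, $x^q = \Tr(x) - x$, and $x^{q+1} = \N(x)$, valid for every $x \in \mathbb F_{q^2}$. Substituting these into $x^{q+1} + x^2 + ax^q + bx + c$, the norm terms cancel and $f$ collapses to the affine expression
\[f(x) = (\Tr(x) - a + b)x + a\Tr(x) + c.\]
Setting $s = \Tr(x) \in \mathbb F_q$ and $s_0 = a - b$, the condition $f(x) = 0$ with $\Tr(x) = s$ becomes the single equation $(s - s_0)x + as + c = 0$, and I would stratify the counting by the value of $s$.

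For each $s \in \mathbb F_q$ with $s \ne s_0$ (automatic when $s_0 \notin \mathbb F_q$), the unique candidate $x = -(as+c)/(s-s_0) \in \mathbb F_{q^2}$ actually has trace $s$ precisely when, after multiplying by $(s-s_0)(s+b^q-a^q)$, a cubic $P(s) = 0$ over $\mathbb F_q$ holds. A direct expansion yields
\[P(s) = s^3 + \Tr(b)s^2 + (\Tr(c) - \N(a) + \N(b))s + \Tr((b-a)^q c),\]
and the identity $\Tr((b-a)^q c) = \Tr((b-a)^q(a^2-ab+c)) + \Tr(a)\N(b-a)$, coming from $a(b-a)(b-a)^q = a\N(b-a) \in a\mathbb F_q$, converts this to the cubic in the statement. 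Since $s \mapsto x$ is then a bijection between roots of $P$ in $\mathbb F_q$ and zeros of $f$ in $\mathbb F_{q^2}$ when $s_0 \notin \mathbb F_q$, this settles the first assertion.

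For the remaining cases, assume $s_0 \in \mathbb F_q$. Direct substitution verifies $P(s_0) = 0$, so $P(s) = (s-s_0)Q(s)$ with $Q(s) = s^2 + (a^q+b)s + \Tr(c)$ obtained by one step of polynomial division. On the special fiber $\Tr(x) = s_0$, the reduced equation degenerates to the constant condition $\gamma := a^2 - ab + c = 0$, contributing $q$ zeros of $f$ if $\gamma = 0$ and none otherwise. A short trace calculation gives the pivotal identity $Q(s_0) = \Tr(\gamma)$. When $\Tr(\gamma) \ne 0$, one has $\gamma \ne 0$ and $s_0$ is not a root of $Q$, so the total count equals the number of roots of $Q$ in $\mathbb F_q$, yielding the second assertion.

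Finally, when $\Tr(\gamma) = 0$, $s_0$ is a root of $Q$ and Vieta identifies the second root as $s_1 = -\Tr(a)$, with $s_1 = s_0$ exactly when $a^q + 2a - b = 0$. Crossing the dichotomy $\gamma = 0$ versus $\gamma \ne 0$ (which controls the $q$-versus-$0$ contribution from the fiber $\Tr(x) = s_0$) with $s_1 = s_0$ versus $s_1 \ne s_0$ (which controls the $0$-versus-$1$ contribution from $\mathbb F_q \setminus \{s_0\}$) produces the four tabulated values. The hard part will be the explicit expansion yielding $P$ and the verification of the two clean identities $Q(s_0) = \Tr(\gamma)$ and $s_1 = -\Tr(a)$; everything else is bookkeeping, and the entire argument is characteristic-free, so it covers both even and odd $q$ uniformly.
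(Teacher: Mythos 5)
Your proposal is correct and follows essentially the same route as the paper: the reduction $x^{q+1}+x^2+ax^q+bx+c=(\Tr(x)-a+b)x+a\Tr(x)+c$ is exactly the paper's factorization $(x+a)(x^q+x-a+b)+a^2-ab+c$, and both arguments set up the same bijection between zeros in $\mathbb F_{q^2}$ and trace values $\tau\in\mathbb F_q$ satisfying a cleared-denominator polynomial equation. Your identities $Q(s_0)=\Tr(a^2-ab+c)$ and $s_1=-\Tr(a)$, and the uniform factorization $P=(s-s_0)Q$ in the case $a-b\in\mathbb F_q$, all check out and reproduce the paper's case analysis.
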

\begin{proof}
Note that
\[x^{q+1}+x^2+ax^q+bx+c=(x+a)(x^q+x-a+b)+a^2-ab+c.\]
If $a-b\in\mathbb F_q$ and $a^2-ab+c=0$, then clearly its number of zeros in $\mathbb F_{q^2}$ is
\[\begin{cases}q+1&\text{if }a^q+2a-b\ne0,\\q&\text{if }a^q+2a-b=0.\end{cases}\]
Suppose that $a-b\notin\mathbb F_q$ or $a^2-ab+c\ne0$ from now on. If $x_0\in\mathbb F_{q^2}$ is a zero of the equation
\begin{equation}\label{eq0}(x+a)(x^q+x-a+b)+a^2-ab+c=0,\end{equation}
then $x_0^q+x_0-a+b\ne0$, and for $\tau=\Tr(x_0)$,
\[\tau+\Tr(a)+\Tr\left(\frac{a^2-ab+c}{\tau-a+b}\right)=\Tr\left(x_0+a+\frac{a^2-ab+c}{x_0^q+x_0-a+b}\right)=0.\]
Conversely, if there exists $\tau\in\mathbb F_q$ such that
\begin{equation}\label{eq1}\tau+\Tr(a)+\Tr\left(\frac{a^2-ab+c}{\tau-a+b}\right)=0\qquad(\tau-a+b\ne0),\end{equation}
then let
\[x_0=-a-\frac{a^2-ab+c}{\tau-a+b},\]
so that $\Tr(x_0)=\tau$ and $x_0$ is a zero of \eqref{eq0}. Moreover, if $x_0$ and $x_1$ are both zeros of \eqref{eq0} in $\mathbb F_{q^2}$ with $\Tr(x_1)=\Tr(x_0)$, then
\[x_0+a+\frac{a^2-ab+c}{\Tr(x_0)-a+b}=x_1+a+\frac{a^2-ab+c}{\Tr(x_1)-a+b}\]
and obviously $x_0=x_1$. Hence there is a one-to-one correspondence between the zeros of \eqref{eq0} in $\mathbb F_{q^2}$ and those $\tau\in\mathbb F_q$ satisfying \eqref{eq1}. If $a-b\notin\mathbb F_q$, then multiplying both sides of \eqref{eq1} by $\N(\tau-a+b)$ we obtain its equivalent form
\[(\tau+\Tr(a))(\tau^2+\Tr(b-a)\tau+\N(b-a))+\Tr((a^2-ab+c)(\tau-a^q+b^q))=0;\]
that is,
\[\tau^3+\Tr(b)\tau^2+(\N(b)-\N(a)+\Tr(c))\tau+\Tr(a)\N(b-a)+\Tr((a^2-ab+c)(b^q-a^q)).\]
If $a-b\in\mathbb F_q$ and $\Tr(a^2-ab+c)\ne0$, then \eqref{eq1} is equivalent to
\[(\tau+\Tr(a))(\tau-a+b)+\Tr(a^2-ab+c)=0,\]
where
\[(\tau+\Tr(a))(\tau-a+b)+\Tr(a^2-ab+c)=\tau^2+(a^q+b)\tau+\Tr(c).\]
If $a-b\in\mathbb F_q$ and $\Tr(a^2-ab+c)=0$, then the result follows immediately from \eqref{eq1}.
\end{proof}

Having obtained the number of zeros of $x^{q+1}+x^2+ax^q+bx+c$ in $\mathbb F_{q^2}$ in the case of even characteristic, we can now give a sufficient condition for
\[\frac{x^{q+1}+\alpha x^{2q}+\beta x^2+ax^q+bx+c}{x^q+x+d}\]
being a permutation rational function of $\mathbb F_{q^2}$, where $\alpha+\beta=1$ and $\N(\alpha)\ne\N(\beta)$. For simplicity, let $\alpha=0$ and $\beta=1$, in which case the condition coincides with that of odd characteristic. 

\begin{theorem}
Let $q$ be even, $a^2+ab+c\in\{0,\Tr(a+b)^2\}$ and $a+b+d=0$. Then
\[\frac{x^{q+1}+x^2+ax^q+bx+c}{x^q+x+d}\]
is a permutation rational function of $\mathbb F_{q^2}$.
\end{theorem}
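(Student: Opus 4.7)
The plan is to show, for every $t\in\mathbb F_{q^2}$, that
\[g_t(x):=x^{q+1}+x^2+(a+t)x^q+(b+t)x+c+dt\]
has exactly one zero in $\mathbb F_{q^2}$. I will first apply the characteristic-two identity
\[x^{q+1}+x^2+Ax^q+Bx+C=(x+A)(x^q+x+A+B)+A^2+AB+C\]
with $A=a+t$, $B=b+t$, $C=c+dt$. Because $a+b+d=0$, the $t$-dependent terms in $A^2+AB+C$ all cancel, so the constant collapses to $\gamma:=a^2+ab+c$, independent of $t$. The substitution $y=x+a+t$ then converts $g_t(x)$ into
\[y(y^q+y+\alpha)+\gamma,\qquad\alpha:=a^q+b+\Tr(t).\]
Since $(a^q+b)^q=a+b^q$, the hypothesis $d=a+b\notin\mathbb F_q$ yields $a^q+b\notin\mathbb F_q$, and adding $\Tr(t)\in\mathbb F_q$ keeps $\alpha$ outside $\mathbb F_q$. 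The same hypothesis, combined with the characteristic-two fact $\ker\Tr=\mathbb F_q$, guarantees $\tau_m:=\Tr(a+b)\ne 0$.

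If $\gamma=0$, the zeros of $g_t$ in $\mathbb F_{q^2}$ correspond to $y=0$ or $y^q+y=\alpha$. The second equation has no solution because $y^q+y\in\mathbb F_q$ while $\alpha\notin\mathbb F_q$, so $g_t$ has the unique zero $x=a+t$.

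If $\gamma=\tau_m^2\ne 0$, I apply the lemma immediately preceding this theorem (the one valid in any characteristic) to the polynomial $y^{q+1}+y^2+\alpha y+\gamma$, using $\alpha\notin\mathbb F_q$. This reduces the count to the number of zeros in $\mathbb F_q$ of
\[x^3+\tau_m x^2+\N(\alpha)x+\tau_m^3\]
(using $\Tr(\gamma)=0$ and $\Tr(\alpha^q\gamma)=\tau_m^3$). The substitution $x=u+\tau_m$ eliminates the quadratic coefficient in characteristic two and produces the depressed cubic $u^3+Ru+R\tau_m$ with $R:=\tau_m^2+\N(\alpha)$. If $R=0$ the cubic becomes $u^3$, whose zero $u=0$ is unique; otherwise Lemma \ref{cubic} says exactly one zero in $\mathbb F_q$ occurs iff $x^2+x+R/\tau_m^2+1$ is irreducible over $\mathbb F_q$. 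Since $R/\tau_m^2+1=\N(\alpha)/\tau_m^2$, the Artin--Schreier criterion recasts this as $\operatorname{Tr}_{\mathbb F_q/\mathbb F_2}(\N(\alpha)/\tau_m^2)=1$.

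The main technical obstacle is establishing this last trace identity uniformly in $t$. Writing $s:=\Tr(t)\in\mathbb F_q$ and expanding $\N(\alpha)=\N(a^q+b)+s\tau_m+s^2$, division by $\tau_m^2$ yields two $s$-dependent summands whose absolute traces cancel thanks to $\operatorname{Tr}_{\mathbb F_q/\mathbb F_2}(x^2)=\operatorname{Tr}_{\mathbb F_q/\mathbb F_2}(x)$. An analogous manipulation based on $a^q+b=(a+b)+\Tr(a)$ further reduces the identity to $\operatorname{Tr}_{\mathbb F_q/\mathbb F_2}(\N(\mu))=1$ for $\mu:=(a+b)/\tau_m$. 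Finally, $\Tr(\mu)=\tau_m/\tau_m=1$, so $\mu\in\mathbb F_{q^2}\setminus\mathbb F_q$, making $y^2+y+\N(\mu)$ its irreducible minimal polynomial over $\mathbb F_q$; the Artin--Schreier criterion then delivers $\operatorname{Tr}_{\mathbb F_q/\mathbb F_2}(\N(\mu))=1$, completing the proof.
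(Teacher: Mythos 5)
Your proof is correct, and it rests on the same two pillars as the paper's: the general-characteristic lemma converting the zero count of $x^{q+1}+x^2+ax^q+bx+c$ into the zero count of a cubic over $\mathbb F_q$, and Lemma \ref{cubic} for the even case. The execution differs in a worthwhile way. The paper substitutes $\alpha=a+t$, $\beta=b+t$ directly into the lemma's cubic and then manipulates the coefficients $A,B,C$ to show $\Tr(\alpha+\beta)(A^2+B)=AB+C$, finishing by exhibiting that a rational root of $x^2+x+(A^2+B)^3/(AB+C)^2+1$ would force $a+b\in\mathbb F_q$. You instead normalize first: the shift $y=x+a+t$ (legitimate, since $A^2+AB+C$ collapses to $\gamma=a^2+ab+c$ when $a+b+d=0$) kills the $y^q$-coefficient, so the lemma yields the very explicit cubic $x^3+\tau_mx^2+\N(\alpha)x+\tau_m^3$ with $\tau_m=\Tr(a+b)\ne0$ and $\alpha=a^q+b+\Tr(t)\notin\mathbb F_q$; the $\gamma=0$ case then needs no lemma at all. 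Your final step replaces the paper's contradiction argument by the Artin--Schreier trace criterion, peeling off the $\Tr(t)$- and $\Tr(a)$-dependence via $\operatorname{Tr}_{\mathbb F_q/\mathbb F_2}(x^2)=\operatorname{Tr}_{\mathbb F_q/\mathbb F_2}(x)$ until only $\operatorname{Tr}_{\mathbb F_q/\mathbb F_2}(\N(\mu))=1$ for $\mu=(a+b)/\tau_m$ remains, which holds because $y^2+y+\N(\mu)$ is the (irreducible) minimal polynomial of $\mu\notin\mathbb F_q$. This is logically equivalent to the paper's contradiction but makes the uniformity in $t$ transparent and isolates exactly where the hypothesis $d=a+b\notin\mathbb F_q$ is used; the paper's version avoids absolute traces entirely and stays within a single coefficient identity. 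Both are complete proofs; I checked your coefficient computations (the constant term $\Tr(\alpha^q\gamma)=\tau_m^3$, the depressed cubic $u^3+Ru+R\tau_m$, and the case $R=0$ giving $u^3$) and they are all correct.
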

\begin{proof}
The case $a^2+ab+c=a+b+d=0$ is trivial, since then
\[\frac{(x+a)(x^q+x+a+b)+a^2+ab+c}{x^q+x+d}=x+a.\]
We claim that
\[x^3+\Tr(\beta)x^2+(\Tr(\alpha^2+\alpha\beta)+\N(\alpha)+\N(\beta))x+\Tr(\alpha+\beta)^3+\Tr(\alpha)\N(\alpha+\beta)\]
has exactly one zero in $\mathbb F_q$ for all $\alpha,\beta\in\mathbb F_{q^2}$ such that $\Tr(\alpha+\beta)\ne0$. Assume $a^2+ab+c=\Tr(a+b)^2$ and $a+b+d=0$. Let $\alpha=a+t$ and $\beta=b+t$ for $t\in\mathbb F_{q^2}$. Then $\Tr(\alpha+\beta)=\Tr(a+b)=\Tr(d)\ne0$, and
\[\begin{split}&\mathrel{\phantom{=}}\Tr(\alpha^2+\alpha\beta)+\N(\alpha)+\N(\beta)\\&=\Tr(a^2+ab+(a+b)t)+\N(a+t)+\N(b+t)\\&=\Tr(c+dt)+\N(a+t)+\N(b+t),\end{split}\]
with
\[\begin{split}&\mathrel{\phantom{=}}\Tr(\alpha+\beta)^3+\Tr(\alpha)\N(\alpha+\beta)\\&=\Tr(a+b)^3+\Tr(a+t)\N(a+b)\\&=\Tr((a+b)^q((a+t)^2+(a+t)(b+t)+c+dt))+\Tr(a+t)\N(a+b).\end{split}\]
The claim, combined with the preceding proposition, completes the proof.

For the claim, let $A=\Tr(\beta)$, $B=\Tr(\alpha^2+\alpha\beta)+\N(\alpha)+\N(\beta)$ and $C=\Tr(\alpha+\beta)^3+\Tr(\alpha)\N(\alpha+\beta)$, which satisfy
\[\begin{split}&\mathrel{\phantom{=}}\Tr(\alpha)(A^2+B)+\Tr(\beta)^3\\&=\Tr(\alpha)(\Tr(\alpha^2+\alpha\beta+\beta^2)+\N(\alpha)+\N(\beta))+\Tr(\beta)^3\\&=\Tr(\alpha)\Tr(\alpha+\beta)^2+\Tr(\beta)^3+\Tr(\alpha)(\Tr(\alpha\beta)+\N(\alpha)+\N(\beta))\\&=\Tr(\alpha+\beta)^3+\Tr(\alpha)^2\Tr(\beta)+\Tr(\alpha)(\Tr(\alpha)\Tr(\beta)+\N(\alpha+\beta))\\&=\Tr(\alpha+\beta)^3+\Tr(\alpha)\N(\alpha+\beta)\\&=C,\end{split}\]
and then
\[\Tr(\alpha+\beta)(A^2+B)=\Tr(\alpha)(A^2+B)+\Tr(\beta)^3+\Tr(\beta)B=AB+C.\]
Since $\Tr(\alpha+\beta)\ne0$, we have $A^2+B=0$ if $AB+C=0$. By Lemma \ref{cubic}, it remains to prove that
\[x^2+x+\frac{(A^2+B)^3}{(AB+C)^2}+1,\]
is irreducible over $\mathbb F_q$ provided $AB+C\ne0$. Assume, on the contrary, that there exists some $x_0\in\mathbb F_q$ such that
\[x_0^2+x_0+\frac{(A^2+B)^3}{(AB+C)^2}+1=0,\]
where
\[\frac{(A^2+B)^3}{(AB+C)^2}+1=\frac{A^2+B+\Tr(\alpha+\beta)^2}{\Tr(\alpha+\beta)^2}=\frac{\Tr(\alpha\beta)+\N(\alpha)+\N(\beta)}{\Tr(\alpha+\beta)^2}.\]
Let $x_1=\Tr(\alpha+\beta)x_0+\Tr(\alpha)$, so that
\[\begin{split}&\mathrel{\phantom{=}}x_1^2+\Tr(\alpha+\beta)x_1+\N(\alpha+\beta)\\&=\Tr(\alpha+\beta)^2x_0^2+\Tr(\alpha)^2+\Tr(\alpha+\beta)^2x_0+\Tr(\alpha+\beta)\Tr(\alpha)+\N(\alpha+\beta)\\&=\Tr(\alpha+\beta)^2(x_0^2+x_0)+\Tr(\alpha)\Tr(\beta)+\N(\alpha+\beta)\\&=\Tr(\alpha\beta)+\N(\alpha)+\N(\beta)+\Tr(\alpha)\Tr(\beta)+\N(\alpha+\beta)\\&=0,\end{split}\]
which means $x_1=a+b$ or $x_1=a^q+b^q$, but $x_1\in\mathbb F_q$, a contradiction.
\end{proof}

Finally, rational functions derived from the second kind of $q$-quadratic polynomials are studied. However, it turns out that only a finite number of such permutation rational functions exist.

\begin{proposition}
Let $\alpha=-\beta^q\ne0$. Then
\[\frac{\alpha x^{2q}+\beta x^2+ax^q+bx+c}{x^q+x+d}\]
is a permutation rational function of $\mathbb F_{q^2}$ if and only if $q=3$, $a^q+b\notin\mathbb F_q$, $\beta^2=(b^2-a^2-\beta c)^2=1$ and $\beta(a-b)-d=0$.
\end{proposition}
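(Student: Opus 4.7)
The plan is to apply Proposition~\ref{N2} to
\[-\beta^qx^{2q}+\beta x^2+(a+t)x^q+(b+t)x+c+dt\]
for every $t\in\mathbb F_{q^2}$ and require it to have exactly one zero in $\mathbb F_{q^2}$. Writing
\[A_0^{(t)}=\beta^{-1}((b+t)-(a+t)^q)^2-8(c+dt),\]
\[A_1^{(t)}=\beta^{-1}((b+t)^2-(a+t)^{2q})-4(c+dt),\]
\[A_2^{(t)}=\beta^{-1}(a^q+b+\Tr(t))^2,\]
the quantity $(a+t)^q+(b+t)=a^q+b+\Tr(t)$ depends on $t$ only through $u:=\Tr(t)\in\mathbb F_q$. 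I would first dispose of the case $a^q+b\in\mathbb F_q$: then some $t$ satisfy $(a+t)^q+(b+t)=0$, the set of such $t$ is an affine $\mathbb F_q$-line, and since $d\notin\mathbb F_q$ the map $t\mapsto\Tr(c+dt)$ is non-constant there, so a suitable $t$ gives $\Tr(c+dt)\ne0$ and hence $0$ zeros by the second list of Proposition~\ref{N2}, a contradiction. Thus $a^q+b\notin\mathbb F_q$ is forced, and we are thereafter always in the first list.

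Of the four sub-cases of that list, the only two yielding exactly one zero are (i) $A_2^{(t)}\in\mathbb F_q$ with $\Tr(A_1^{(t)})\ne0$, or (ii) $A_2^{(t)}\notin\mathbb F_q$ with
\[P(t):=\Tr(A_1^{(t)})^2+\Tr(A_0^{(t)}((A_2^{(t)})^q-A_2^{(t)}))=0,\]
since the remaining branches produce $1\pm\eta(\N(\beta))$ or $1+\eta(\N(\beta))(q-1)$, which never equal $1$ for $q\ge 3$. Since $A_2^{(t)}\in\mathbb F_q$ is a degree-$2$ condition in $u$, it fails for all but at most two values of $u$, so $P(t)=0$ must hold on all but at most $2q$ values of $t\in\mathbb F_{q^2}$.

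To exploit this I would set $\varepsilon=d-d^q\ne 0$ (so that $\Tr(\varepsilon)=0$) and parameterize $t=t_0(u)+s\varepsilon$ with $s\in\mathbb F_q$ and $t_0(u)$ a fixed representative of trace $u$. Then $P(t)$ becomes a polynomial $P(s,u)\in\mathbb F_{q^2}[s,u]$ of bounded total degree, in particular of degree at most $4$ in $s$. For each fixed $u$, either $P(\cdot,u)\equiv0$, or, applying the Weil bound to $\sum_{s\in\mathbb F_q}\eta(P(s,u))$, for $q$ large enough some $s\in\mathbb F_q$ will yield $\eta(P(s,u))=+1$, contradicting the permutation property. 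Forcing $P\equiv0$ gives a system of polynomial identities in $a,b,c,d,\beta$; step-by-step elimination should bound $q$ from above and single out the conditions $\beta^2=(b^2-a^2-\beta c)^2=1$ and $\beta(a-b)-d=0$ together with $q=3$. The main obstacle will be precisely this symbolic bookkeeping: expanding $P(s,u)$ cleanly and recognizing that its coefficients vanish identically only under the claimed constraints.

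Finally, for the converse, once $q=3$, $a^q+b\notin\mathbb F_q$, $\beta^2=(b^2-a^2-\beta c)^2=1$ and $\beta(a-b)-d=0$ hold, I would substitute back into Proposition~\ref{N2} for each $t\in\mathbb F_9$ and verify that every relevant sub-case yields exactly one zero, or equivalently perform a direct exhaustive check over the $81$ values of $t$, which is entirely routine.
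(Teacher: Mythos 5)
Your overall strategy matches the paper's: reduce to Proposition \ref{N2} applied to the numerator plus $t(x^q+x+d)$, eliminate the case $a^q+b\in\mathbb F_q$ by finding $t$ with $a^q+b+\Tr(t)=0$ and $\Tr(c+dt)\ne0$, and then force the identity $P(t)=\Tr(A_1)^2+\Tr(A_0(A_2^q-A_2))=0$ on the (at least $q^2-2q$) values of $t$ with $A_2\notin\mathbb F_q$. One remark: the Weil-bound detour is unnecessary here. The first bullet of Proposition \ref{N2} gives $1+\eta(\N(\beta))\eta(P(t))$ zeros, so the permutation property requires $\eta(P(t))=0$, i.e.\ $P(t)=0$ exactly; since $P$ reduced modulo $t^{q^2}-t$ has degree at most $4q<q^2-2q$ for $q>6$, a plain degree count already yields $P\equiv0$, which is what the paper does.

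The genuine gap is in what you do with $P\equiv0$. You defer the decisive step to "symbolic bookkeeping" that you expect to "single out the conditions $\beta^2=(b^2-a^2-\beta c)^2=1$ and $\beta(a-b)-d=0$ together with $q=3$." That expectation is wrong: for $q>5$ the identity $P\equiv0$ does not carve out a family of solutions, it produces an outright contradiction, so no such permutation rational function exists at all once $q$ exceeds $5$. Concretely, the paper reduces $A_0,A_1,A_2$ modulo $t^q-t$ and reads off the coefficient of $t^3$ in $P$, which is $-32(d-d^q)(\beta^{-q}-\beta^{-1})$; since $d\notin\mathbb F_q$ this forces $\beta\in\mathbb F_q$. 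But then $A_2-A_2^q=2\beta^{-1}(a^q+b-a-b^q)\Tr(t)+\beta^{-1}((a^q+b)^2-(a+b^q)^2)$ is linear in $\Tr(t)$ with nonzero leading coefficient (as $a^q+b\notin\mathbb F_q$), so there exist $t$ with $A_2\in\mathbb F_q$ and $\Tr(A_1)\ne0$; for such $t$ the identity $P\equiv0$ collapses to $\Tr(A_1)^2=0$, contradicting the requirement $\Tr(A_1)\ne0$ from the second bullet. Consequently the stated conditions, including $q=3$, can only be extracted by exhaustive search over $q\le5$, and your plan covers neither that search (you only propose verifying the converse at $q=3$, leaving $q=5$ and the other small cases unexamined) nor the case of even $q$, which the proposition also rules out. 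Without the coefficient extraction and the resulting contradiction, the "only if" direction is not established.
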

\begin{proof}
If $q$ is even, then it is obvious. Let $q$ be odd. If $a^q+b\in\mathbb F_q$, then $a^q+b+\Tr(t)=0$ and $\Tr(c+dt)\ne0$ for some $t\in\mathbb F_{q^2}$, so that
\begin{equation}\label{eq13}\alpha x^{2q}+\beta x^2+ax^q+bx+c+t(x^q+x+d)\end{equation}
has no zero in $\mathbb F_{q^2}$, as a consequence of Proposition \ref{N2}. Suppose $q>5$, $a^q+b\notin\mathbb F_q$ and \eqref{eq13} has exactly one zero in $\mathbb F_{q^2}$ for every $t\in\mathbb F_{q^2}$. Then we have, by Proposition \ref{N2}, either
\[A_2\notin\mathbb F_q\quad\text{and}\quad\Tr(A_1)^2+\Tr(A_0(A_2^q-A_2))=0,\]
or
\[A_2\in\mathbb F_q\quad\text{and}\quad\Tr(A_1)\ne0,\]
where $A_0=\beta^{-1}(b-a^q+t-t^q)^2-8(c+dt)$, $A_1=\beta^{-1}(b^2-a^{2q}+2bt+t^2-2a^qt^q-t^{2q})-4(c+dt)$ and $A_2=\beta^{-1}(a^q+b+\Tr(t))^2$. Note that
\[\begin{split}A_2-A_2^q&=(\beta^{-1}-\beta^{-q})\Tr(t)^2+2(\beta^{-1}(a^q+b)-\beta^{-q}(a+b^q))\Tr(t)\\&\mathrel{\phantom{=}}+\beta^{-1}(a^q+b)^2-\beta^{-q}(a+b^q)^2,\end{split}\]
and thus there are at least $q^2-2q$ number of $t\in\mathbb F_{q^2}$ such that $A_2\notin\mathbb F_q$. Regard $t$ as an indeterminate over $\mathbb F_{q^2}$ by abuse of notation. Then the polynomial
\[\Tr(A_1)^2+\Tr(A_0(A_2^q-A_2))\]
in $t$ has degree at most $4q$ and at least $q^2-2q$ number of zeros in $\mathbb F_{q^2}$. This implies that it is a zero polynomial, for otherwise $q^2-2q\le4q$. Moreover, we have
\[A_0\equiv\beta^{-1}(b-a^q)^2-8(c+dt)\pmod{t^q-t},\]
\[A_1\equiv\beta^{-1}(b^2-a^{2q}+2(b-a^q)t)-4(c+dt)\pmod{t^q-t},\]
and
\[A_2\equiv\beta^{-1}(a^q+b+2t)^2\pmod{t^q-t}.\]
Look at the cubic coefficient of $\Tr(A_1)^2+\Tr(A_0(A_2^q-A_2))$ modulo $t^q-t$, which is
\[-32\Tr(d(\beta^{-q}-\beta^{-1}))=-32(d-d^q)(\beta^{-q}-\beta^{-1}).\]
Then $\beta\in\mathbb F_q$ as $d\notin\mathbb F_q$. Now
\[A_2-A_2^q=2\beta^{-1}(a^q+b-a-b^q)\Tr(t)+\beta^{-1}((a^q+b)^2-(a+b^q)^2)\]
and $a^q+b\notin\mathbb F_q$, so $A_2\in\mathbb F_q$ with $\Tr(A_1)\ne0$ for some $t\in\mathbb F_{q^2}$, but
\[\Tr(A_1)^2+\Tr(A_0(A_2^q-A_2))=0,\]
a contradiction.

The case $q\le5$ can be verified by exhaustive search.
\end{proof}

\bibliographystyle{abbrv}
\bibliography{references}

\end{document}